\newtheorem{thm}[equation]{Theorem}
\newtheorem{lemma}[equation]{Lemma}
\newtheorem{prop}[equation]{Proposition}
\newtheorem{definition}[equation]{Definition}
\newtheorem{conjecture}[equation]{Conjecture}
\newtheorem{cor}[subsection]{Corollary}
\newtheorem{hyp}[equation]{Hypothesis}
\newtheorem{question}[equation]{Question}
\newtheorem{defn}[subsection]{Definition}
\newtheorem{step}[subsection]{Step}
\newtheorem*{thm*}{Theorem}
\theoremstyle{remark}
\newtheorem{remark}[equation]{Remark}
\numberwithin{equation}{section}
\newcommand{\isoarrow}{{~\overset\sim\longrightarrow~}}
\newcommand{\Gm}{{\mathbb G}_m}
\newcommand{\CG}{{\mathcal{G}}}
\newcommand{\C}{{\mathcal{C}}}
\newcommand{\CA}{{\mathcal{A}}}
\newcommand{\CL}{{\mathcal{L}}}
\newcommand{\G}{{\Gamma}}
\newcommand{\ra}{{~\rightarrow~}}
\newcommand{\Qlb}{{\overline{\mathbb Q}_{\ell}}}
\newcommand{\Fp}{{{\mathbb F}_p}}
\newcommand{\ad}{{\mathbf A}}
\begin{document}
\author{Michael Harris}
\thanks{This research received funding from the European Research Council under the European Community's Seventh Framework Programme (FP7/2007-2013) / ERC Grant agreement no. 290766 (AAMOT).  The author was partially supported by NSF Grant DMS-1701651.
}

\address{Michael Harris\\
Department of Mathematics, Columbia University, New York, NY  10027, USA}
 \email{harris@math.columbia.edu}

\date{\today}

\title[Incorrigible representations]{Incorrigible representations}

\maketitle

\begin{abstract}
As a consequence of his numerical local Langlands correspondence for $GL(n)$, Henniart deduced the following theorem: If $F$ is a nonarchimedean local field and if $\pi$ is an irreducible admissible representation of $GL(n,F)$, then, after a finite sequence of cyclic base changes, the image of $\pi$ contains a vector fixed under an Iwahori subgroup. This result was indispensable in all proofs of the local Langlands correspondence. Scholze later gave a different proof, based on the analysis of nearby cycles in the cohomology of the Lubin-Tate tower.

Let $G$ be a reductive group over $F$. Assuming a theory of stable cyclic base change exists for $G$, we define an incorrigible supercuspidal representation $\pi$ of $G(F)$ to be one with the property that, after any sequence of cyclic base changes, the image of $\pi$ contains a supercuspidal member. If $F$ is of positive characteristic then we define $\pi$ to be pure if the Langlands parameter attached to $\pi$ by Genestier and Lafforgue is pure in an appropriate sense. We conjecture that no pure supercuspidal representation is incorrigible. We sketch a proof of this conjecture for $GL(n)$ and for classical groups, using properties of standard $L$-functions; and we show how this gives rise to a proof of Henniart's theorem and the local Langlands correspondence for $GL(n)$ based on V. Lafforgue's Langlands parametrization, and thus independent of point-counting on Shimura or Drinfel'd modular varieties.

This paper is an outgrowth of the author's paper arXiv:1609.03491 with G. B\"ockle, S. Khare, and J. Thorne.
\end{abstract}

\tableofcontents


\section{Introduction}  A little-known but indispensable step in every proof of the local Langlands correspondence for $G = GL(n)$ over a $p$-adic field is the following {\it splitting theorem} of Henniart:
\begin{thm}\cite{He90}[Henniart]\label{gln}  Let $F_0$ be a non-archimedean local field, $n$ a positive integer, and $\pi_0$ a supercuspidal representation of $G(F_0)$.  There is a finite sequence of cyclic extensions $F_0 \subset F_1 \subset \dots \subset F_r$ such that, if we define $\pi_i$ by induction as the representation of $G(F_i)$ obtained as the cyclic base change from $F_{i-1}$ of $\pi_{i-1}$, then $\pi_r$ is not supercuspidal.
\end{thm}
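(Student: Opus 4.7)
My plan is to translate the statement into a question about restriction of Galois parameters, using Henniart's numerical local Langlands correspondence, and then to exploit the pro-solvability of the Weil group of $F_0$.

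First, I would attach to $\pi_0$ an $n$-dimensional continuous irreducible representation $\rho_0 \colon W_{F_0} \to GL_n(\CC)$ via the numerical local Langlands correspondence, and combine this with the compatibility between cyclic base change (in the sense of Arthur--Clozel) and restriction of parameters. At each stage one then has $\pi_i$ corresponding to $\rho_i := \rho_0|_{W_{F_i}}$, and $\pi_i$ is supercuspidal precisely when $\rho_i$ is irreducible. The task reduces to producing a tower of cyclic extensions $F_0 \subset F_1 \subset \dots \subset F_r$ such that $\rho_0|_{W_{F_r}}$ becomes reducible.

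Second, I would invoke the monomiality of irreducible Weil parameters: every finite Galois extension of a local field has solvable Galois group (the tame quotient of inertia is pro-cyclic of order prime to the residue characteristic, and wild inertia is pro-$p$), so $W_{F_0}$ is pro-solvable, and it follows by a standard Clifford-theoretic argument that every continuous irreducible representation of $W_{F_0}$ of dimension $n$ is of the form $\mathrm{Ind}_{W_K}^{W_{F_0}}(\chi)$ for some separable extension $K/F_0$ with $[K:F_0]=n$ and some character $\chi$ of $W_K$. Let $L/F_0$ be the Galois closure of $K$; its Galois group is also solvable, so there exists a tower $F_0 \subset F_1 \subset \dots \subset F_r = L$ in which every step $F_i/F_{i-1}$ is cyclic. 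Since $W_L \subset W_K$ and $W_L$ is normal in $W_{F_0}$, Mackey's formula produces
\[ \rho_0|_{W_L} \;\cong\; \bigoplus_{\sigma \in \mathrm{Gal}(L/F_0)/\mathrm{Gal}(L/K)} \chi^{\sigma}|_{W_L}, \]
a sum of $n$ characters; for $n \geq 2$ this is reducible, so $\pi_r$ fails to be supercuspidal.

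The main obstacle is the compatibility invoked in the first step: one must know that Henniart's numerical correspondence is refined enough to match supercuspidality of cyclic base change with irreducibility of the restricted parameter, rather than merely matching numerical invariants such as conductors and central characters. Granting this, the rest of the argument is essentially group-theoretic, combining monomiality for pro-solvable groups with the classical solvability of finite Galois extensions of local fields.
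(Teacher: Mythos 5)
The weight of your argument rests on the first step, and that step is where the proof breaks down. You need (i) a parameter $\rho_0$ attached to $\pi_0$ such that $\pi_i$ is supercuspidal exactly when $\rho_0|_{W_{F_i}}$ is irreducible, and (ii) the identification of cyclic base change with restriction of parameters. You flag this as ``the main obstacle,'' but it is not a refinement of Henniart's numerical correspondence that one can simply grant: \cite{He88} only produces bijections matching counted sets (fixed conductor, central character, compatibility with twisting and with a hereditary behaviour under base change); it does not attach to $\pi_0$ a Weil-group representation for which supercuspidality of the base change is read off as irreducibility of the restriction. Properties (i)+(ii) are exactly clauses (a)--(c) of the full local Langlands correspondence, and Theorem \ref{gln} is an \emph{input} to every known proof of that correspondence (Henniart's own derivation in \cite{He90} goes through delicate counting arguments, not through this dictionary). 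So as written the argument is circular. The paper's actual proof (of the positive-characteristic reformulation, Theorem \ref{gln1}) is built to avoid precisely the clause ``supercuspidal $\Leftrightarrow$ irreducible parameter'': it globalizes $\pi_0$ (Gan--Lomel\'i), uses V.~Lafforgue's semisimplified parametrization together with Genestier--Lafforgue local--global compatibility, passes to a solvable tower over which the local parameter becomes unramified, and then contradicts supercuspidality at the top of the tower not via irreducibility but via $L$-functions: the unramified Galois Euler factor at the distinguished place has a pole, stability of $\gamma$-factors under highly ramified twists plus the two global functional equations transfer that pole to the local Godement--Jacquet factor, and supercuspidal Godement--Jacquet factors are identically $1$.

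A secondary point on the group theory: once a parameter is granted, monomiality is more than you need, and your justification for it is not correct as stated. Pro-solvability alone does not give monomiality (the solvable group $\mathrm{SL}(2,3)$ has a non-monomial $2$-dimensional irreducible); monomiality of irreducible Weil representations of local fields is true, but it uses the finer structure (pro-$p$ wild inertia with supersolvable quotient), not just solvability. The simpler route, which is the one the paper takes, is that continuity forces $\rho_0$ to be trivial on an open subgroup of inertia, so after a finite solvable (hence cyclic-laddered) extension the restriction is unramified and therefore a direct sum of characters, which already gives reducibility for $n\geq 2$. So your Mackey computation reaches the right endpoint, but it can be replaced by this one-line observation, and in any case it cannot rescue the circularity in the first step.
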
  

Since the local Langlands correspondence is a bijection 
$$\CL_F:  \CG(n,F) \rightarrow \CA(n,F)$$
between the set $\CA(n,F)$ of (equivalence classes of) $n$-dimensional (Frobenius semisimple, smooth) representations of the Weil-Deligne group of $F = F_i$ and the set $\CG(n,F)$ of (equivalence classes of) irreducible admissible representations of $G(F)$, in both cases with coefficients in a field of characteristic $0$, this theorem  is in fact an immediate consequence of the following properties:
\begin{itemize}
\item[(a)] The existence of the local Langlands correspondence;
\item[(b)] The property that $\pi$ is supercuspidal if and only if $\CL_F(\pi)$ is irreducible, and
\item[(c)] The property that cyclic base change from $\CG(n,F_{i-1})$ to $\CG(n,F_i)$ is taken under $\CL_F$ to restriction of Weil-Deligne representations from $F_{i-1}$ to $F_i$.
\end{itemize}

  Indeed, since $\CL_{F_0}(\pi_0)$ is necessarily trivial on an open subgroup of the absolute inertia group of $F_0$, after a finite sequence of cyclic extensions as in Theorem \ref{gln}, $\CL_{F_r}(\pi_r)$ is no longer irreducible, and thus $\pi_r$ cannot be supercuspidal.  
 One knows more:  we can find $r$ such that, writing $\CL_{F_r}(\pi_r) = (\sigma_r,N_r)$, where $\sigma_r$ is an $n$-dimensional representation of the Weil group $W_{F_r}$ of 
$F_r$ and $N$ is a nilpotent operator normalized by $\sigma_r(W_{F_r})$ and satisfying the Weil-Deligne relation with regard to $N$, we may assume that $\sigma_r$ is unramified.  Another property of the local Langlands correspondence then implies the following strengthening of Theorem \ref{gln}:

\begin{thm}\label{Iwahori}  We can find a sequence $F_{i-1} \subset F_i$ as in the notation of Theorem \ref{gln} such that $\pi_r$ contains a vector invariant by an Iwahori subgroup of $G(F_r)$.
\end{thm}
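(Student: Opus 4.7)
The plan is to iterate the reduction sketched before Theorem~\ref{gln}, successively killing the ramification of the Weil group part of the Langlands parameter by cyclic base changes, and then invoke the Iwahori-spherical case of the local Langlands correspondence for $GL(n)$.

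Writing $\CL_{F_0}(\pi_0) = (\sigma_0, N_0)$, smoothness guarantees that the image $\sigma_0(I_{F_0})$ is finite; prosolvability of the inertia group $I_{F_0}$—its tame quotient is procyclic and its wild subgroup pro-$p$—forces this image to be solvable. By property (c) of the introduction we have $\CL_{F_r}(\pi_r) = (\sigma_0|_{W_{F_r}}, N_0)$ for any tower $F_0 \subset F_1 \subset \dots \subset F_r$ of cyclic extensions, so it suffices to produce such a tower with $I_{F_r}$ contained in $\ker(\sigma_0|_{I_{F_0}})$.

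To arrange this, let $E/F_0^{nr}$ be the finite Galois extension cut out by $\ker(\sigma_0|_{I_{F_0}})$, and let $F_r$ be the Galois closure over $F_0$ of a finite subextension of $\bar F_0/F_0$ whose compositum with $F_0^{nr}$ contains $E$ (supplied by Krasner's lemma). Then $\text{Gal}(F_r/F_0)$ is an extension of the cyclic quotient $\text{Gal}(F_r \cap F_0^{nr}/F_0)$ by a group that surjects onto $\sigma_0(I_{F_0})$; both pieces are solvable, so $\text{Gal}(F_r/F_0)$ is solvable and therefore realized by a tower of cyclic subextensions. By construction $\sigma_0|_{W_{F_r}}$ is unramified, and $\pi_r$ is well-defined as an iterated cyclic base change of $\pi_0$.

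The substantive external input is the Borel--Casselman characterization: an irreducible smooth $\pi$ of $GL(n, F_r)$ admits a nonzero Iwahori-fixed vector if and only if $\CL_{F_r}(\pi) = (\sigma, N)$ has $\sigma$ unramified. This is classical, proved through the Bernstein presentation of the Iwahori--Hecke algebra, and is logically independent of Henniart's splitting theorem. Applying it to $\pi_r$ yields the theorem. The main obstacle is to verify that this Iwahori-spherical compatibility is available without prior recourse to Theorem~\ref{gln}; for $GL(n)$ this is well established, though the analogue is considerably more delicate for the general reductive groups taken up in the remainder of the paper.
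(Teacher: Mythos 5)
Your proof is correct and follows the same route the paper sketches in the paragraph immediately preceding the statement: smoothness of the Weil--Deligne parameter makes $\sigma_0(I_{F_0})$ finite, solvability of local Galois groups lets a tower of cyclic extensions of prime degree trivialize the inertial action, and the characterization of Iwahori-spherical representations by unramified parameters (the paper's ``another property of the local Langlands correspondence'') finishes. Your closing caveat is well placed: establishing that characterization without presupposing the full correspondence is exactly what the paper does later, by an induction built on Theorem~\ref{gln1} together with the compatibility of the Genestier--Lafforgue parametrization with parabolic induction.
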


In fact, Henniart proves a stronger version of this theorem in \cite{He90}; the irreducibility of $\CL_{F_0}(\pi_0)$ easily implies that $N_r = 0$, and moreover that $\sigma_r$ is diagonal.  We keep the weaker version here because it is the most that can be expected when $GL(n)$ is replaced by a more general group $G$, and even then it appears one has to find a way to exclude cuspidal unipotent representations (on which more below).

Henniart derived these two theorems in \cite{He90} as a consequence of his {\it numerical correspondence} \cite{He88}.  This was the construction of bijections $\CL'_F:   \CG(n,F) \rightarrow \CA(n,F)$ that were not known to satisfy all the desirable properties of the local Langlands correspondence -- in fact, there were many such correspondences,  only one of which could be the right one -- but that did satisfy certain hereditary properties with respect to cyclic base change.  The proof of the numerical correspondence involved many steps, notably passage between $p$-adic fields and local fields of characteristic $p$, and the application of Laumon's local Fourier transform to the latter.  Most important, perhaps, is that the supercuspidal representations in $\CG(n,F)$ of fixed (minimal) conductor could be {\it counted}, because the Jacquet-Langlands correspondence places these representations in bijection with (a subset of the) irreducible representations of the multiplicative group of a central division algebra of dimension $n^2$ over $F$, and thus reduces the counting problem to the enumeration of representations of finite groups.  Then, by a long induction, Henniart is able to show that this number coincides with a similar number on the $\CA(n,F)$ side of the correspondence.   We wish to draw particular attention to this step because it is unique to reductive groups $G$ of type $A$; it has no analogue for other classes of groups.  

The term ``base change" in the above theorems is ambiguous.  When $F_0$ is of characteristic $0$ the base change of $\pi_{i-1}$ to $GL(n,F_i)$ asserts an explicit relation between the distribution character of $\pi_{i-1}$ and the twisted character of $\pi_i$ with respect to the cyclic group $Gal(F_i/F_{i-1})$; this was established by Arthur and Clozel by a combination of local and global means, using the full strength of the (twisted) trace formula for number fields.  These methods have not been completely developed over global fields of positive characteristic, and there are no complete references even for the case  of $GL(n)$, although it is known to experts.  Thus when $F_0$ is of positive characteristic cyclic base change for $\CG(n,\bullet)$ is simply defined by property (c) above, whereas (c) is proved directly for $p$-adic fields using formal properties of base change.  

The proofs of the full local Langlands correspondence for $GL(n)$ in \cite{LRS} (for $F$ of positive characteristic) or in \cite{HT01} \cite{He00} (for $p$-adic fields) all rely on the numerical correspondence of \cite{He88}.  The proofs for $p$-adic fields build more precisely on an inductive argument based on Henniart's Theorem \ref{gln}, which is used in an argument of Henniart (Theorem 3.3 of \cite{BHK}) to show that  maps from $\CG(n,F)$ to $\CA(n,F)$ that satisfy the formal properties of those constructed in \cite{H97, HT01} (using $p$-adic uniformization) or in \cite{He00} (using formal properties of representations of global Weil groups) are necessarily bijections.   \footnote{The proof in positive characteristic in \S VII.1 (b) of \cite{Laf02}, which is obtained as a consequence of L. Lafforgue's proof of the global Langlands correspondence, ends with a reference to Henniart's numerical correspondence.   However, it is not difficult to see that this is superfluous, because the properties of $L$-functions of pairs that Lafforgue uses already suffice to prove bijectivity.  
} 

Although, like the proofs in \cite{HT01,He00}, Peter Scholze's proof of the local Langlands correspondence for $GL(n)$ makes use of the construction of certain automorphic representations attached to representations of global Weil groups, his approach is based on rather different principles.  In particular, Scholze makes no use at all of Henniart's numerical correspondence, and does not need to pass between characteristic $0$ and characteristic $p$.  Nevertheless, his proof does use Theorem \ref{Iwahori}, but he proves it by a geometric argument, based on a novel analysis of nearby cycles on the tower of coverings of the Lubin-Tate formal moduli space of deformations of $1$-dimensional formal groups.  Like Henniart's numerical correspondence, this method has no obvious generalization to groups other than $GL(n)$.  While the representations of many $G(F)$ can  be realized on the cohomology of the $p$-adic period domains of Rapoport-Zink, and while there is work in progress of Fargues and Scholze that aims to construct a partial correspondence on the cohomology of spaces with actions by any $G(F)$ that can be constructed in the category of diamonds, the geometric properties that allow Scholze to analyze the nearby cycles seem specific to the Lubin-Tate tower,  and thus to $GL(n)$. 

The purpose of this note is to explain a third method to prove Theorems \ref{gln} and \ref{Iwahori} that do not depend on the uniquely favorable properties of $GL(n)$.   The method is based on a hypothetical combination of a local parametrization of representations of groups over local fields, analogous to that provided in positive characteristic by the work of Alain Genestier and Vincent Lafforgue, in the setting of the latter's global parametrization of automorphic representations, with analytic techniques based on the trace formula and on integral representations of $L$-functions.   

To illustrate the method, we begin with a proof of Theorem \ref{gln}  in characteristic $p$, assuming the extension to function fields of the results of \cite{AC} on base change.  
This is followed by a sketch of the argument deducing the local Langlands correspondence from Theorem \ref{gln}.  
This appears to qualify as a new proof, in that, instead of using the Lefschetz formula for the action of correspondences on moduli spaces of shtukas, and point counting, as in \cite{LRS,Laf02} (or the proofs for $p$-adic fields, all of which refer back to Shimura varieties), we use Vincent Lafforgue's geometric study of these moduli spaces.  However, the trace formula is used in the proofs of the main results of \cite{AC}.  Since the trace formula for function fields is still a work in progress, any proof based upon the methods of \cite{AC} must be considered conditional, although the specialists assure us that there is no essential difficulty.  \footnote{It is possible to provide a separate proof of the main local and global results of \cite{AC} that does not require the trace formula; instead, we could use the global Langlands correspondence for $GL(n)$, which is proved (without the trace formula) in \S 16 of \cite{Laf18}, as well as standard properties of Rankin-Selberg $L$-functions of $GL(n)\times GL(n)$.  However, as obserbved in the previous footnote, the proof of the global correspondence in \cite{Laf18} could easily be extended to prove the local correspondence as well, so we do not pursue this argument.}

 The first three sections are thus largely devoted to showing that the proofs indicated here, which are based on the constructions of \cite{GLa,Laf18}, and on the theory of  $L$-functions, are independent of the earlier constructions of \cite{LRS} and \cite{He88} as well as \cite{Laf02}.  

The next three sections introduce a weak notion of global solvable base change for general groups, and a still weaker notion of local solvable base change that is nevertheless sufficient for our applications. 
 Local solvable  base change takes an admissible irreducible representation of a reductive group $G$ over a local field $F$ to a (possibly infinite) set of irreducible admissible representations of $G(FÕ)$ for a finite solvable extension $FÕ$ of $F$.  
 (The notion is only defined for globalizable representations but can easily be generalized if one is willing to include all constituents of a parabolically induced representation in the set.).  
 Existence of global and local base change is conditional upon proof of at least a simple version of the stable twisted trace formula over function fields.  
 We assume this condition for the remainder of the introduction.

Define an {\it incorrigible representation} of $G(F)$ to be a supercuspidal representation $\pi$ of $G(F)$ such that, for any solvable $FÕ/F$, the set of base changes of $\pi$ to $G(FÕ)$ contains a supercuspidal member.  
Henniart's Theorem 1.1 is the statement that there are no incorrigible representations for $GL(n)$.  
For general $G$, Conjecture 4.4 asserts that no {\it pure} supercuspidal representation can be incorrigible, where purity is a property that can be defined consistently over local fields of positive characteristic using the local parametrization of Genestier and Lafforgue.  
In particular, the problematic cuspidal unipotent representations are (implicitly) excluded from consideration by the purity condition.

Section \ref{class} treats classical groups, using the doubling method of Piatetski-Shapiro and Rallis to control the poles of local $L$-functions.  We show that the refinement of this method in \cite{LR05} suffices to prove the analogue of Henniart's theorem \ref{gln} for pure supercuspidal representations.   
The theory of $L$-functions used here is native to classical groups, but the following section shows how to use automorphic descent, as in \cite{GRS}, to reduce questions about classical groups and $G_2$ to $GL(n)$.


In a final section we show how an analogue of Henniart's theorem can be derived from the Hiraga-Ichino-Ikeda conjecture on formal degrees \cite{HII08}.  


The first draft of \cite{BHKT} was optimistic about the possibility of using cyclic descent, in the setting of Vincent Lafforgue's theory of Langlands parameters
for groups over function fields, to prove new results about the local Langlands correspondence for reductive groups over local fields of positive
characteristic.  Our optimism was premature, however:  it was pointed out by J.-P. Labesse, C. Moeglin, and J.-L. Waldspurger, not only that the theory of the
stable twisted trace formula remains to be developed over function fields -- that's already a serious obstacle -- but also that, even supposing that such a
theory were available, its methods do not suffice to control either local or global multiplicities for general groups (essentially for exceptional groups).
This means that the twisted traces we were hoping to use to carry out cyclic descent could very easily be identically zero.  The concept of incorrigible representations
was introduced in order to understand the local obstruction to cyclic descent.  I believe (but I have not checked) that the truth of Conjecture 4.4 would suffice to eliminate this obstruction.  I leave it to the reader to define the analogous notion of incorrigible cuspidal automorphic representations.

Thus this paper owes its existence in large part to my exchanges with Labesse, Moeglin, and Waldspurger, and I thank them for their generosity and patience.
In preparing this paper I have also benefited from helpful comments and suggestions by G. Henniart and L. Lomel\'i.  And I am  grateful once again to J.-L. Waldspurger, who pointed out numerous imprecisions in an earlier version of the manuscript, and who was nevertheless generous enough to read and suggest improvements to the first arXiv version as well.   Finally, this paper is an outgrowth of my  collaboration \cite{BHKT} with G. B\"ockle, C. Khare, and J. Thorne.  Athough they finally chose not to be co-authors of this article, I am grateful for their numerous suggestions; their ideas and perspectives  thoroughly influenced the shape of the article. 



\section{The splitting theorem for $GL(n)$}

Let $X$ be a smooth projective curve over the finite field $k$ of characteristic $p$.  Let $K = k(X)$ denote its function field.  Let $\mathbf{G}_n$ be the group $GL(n)$, viewed as a reductive algebraic group over $K$.  We let $C$ be an algebraically closed coefficient field of characteristic zero; it could be $\C$ or it could be $\Qlb$ with $\ell \neq p$.  All automorphic representations of the groups we will consider,  as well as all irreducible representations of groups over local fields, will have coefficients in $C$.

\begin{hyp}\label{ACBC}  The results of \cite{AC} are valid over $K$.   In particular, let $K'/K$ be a finite extension, and let $K''/K'$ be a cyclic extension of prime degree $q$.  Let $\Pi'$ be a cuspidal automorphic representation of $\mathbf{G}_{n,K'}$.  Then there is an automorphic representation $\Pi'' = BC_{K''/K'}(\Pi')$ of $\mathbf{G}_{n,K''}$ with the following properties:

\begin{itemize}
\item[(a)]  Let $v \in |X_{K'}|$ be a place of $K'$ and suppose $\Pi'_v$ is unramified.  Let $w$ be a place of $K''$ dividing $v$.  Then $\Pi''_w$ is the unramified principal series representation of $GL(n,K''_w)$ obtained by unramified base change from $\Pi'_v$.  In other words, if $K^{\prime,un}_v$, resp. $K^{\prime\prime,un}_w$, is the maximal unramified extension of $K'_v$, resp. of $K''_w$, and if $\sigma_v:  Gal(K^{\prime,un}_v/K'_v) \ra GL(n,C)$ is the Satake parameter of $\Pi'_v$, then $\Pi''_w$ is the unramified principal series representation with Satake parameter $\sigma_v ~|_{Gal(K^{\prime\prime,un}_w/K''_w)}$.

\item[(b)]  Let $\alpha:  Gal(K''/K') \ra C^{\times}$ be a non-trivial character.  Then $\Pi''$ is cuspidal if and only if $\Pi'\otimes \alpha \not\equiv \Pi'$.  If $\Pi'\otimes \alpha \equiv \Pi'$ then $q$ divides $n$; letting $d = n/q$, there is a cuspidal automorphic representation $\Pi_0$ of $GL(d)_{K''}$ such that 
$$\Pi'' \isoarrow \Pi_0 \boxplus \Pi_0^{\tau} \boxplus \dots \boxplus \Pi_0^{\tau^{q-1}},$$
where $\tau \in Gal(K''/K')$ is any non-trivial element, and $\boxplus$ is the Langlands sum.
\end{itemize}
\end{hyp}

\begin{hyp}\label{ACBCloc}  Let  $k$  be a finite field of characteristic $p$ and let $F$ be the local field $k((T))$.  The results of \cite[\S 1]{AC} are valid over $F$.   In particular, let $F'/F$ be a cyclic Galois extension of prime degree $q$.  

\begin{itemize}
\item[(a)]  Let $\pi$ be a supercuspidal representation of $GL(n,F)$.   Then there is a representation $\pi' = BC_{F'/F}(\pi)$ of $GL(n,F')$ with the following property.  Suppose $K'/K$ is a cyclic Galois extension of degree $q$ of global function fields,  $v$ is a place of $K$ that has a unique extension $w$ to $K'$, and there are isomorphisms $K_v \isoarrow F$, $K'_w \isoarrow F'$, so that $Gal(K'/K)$ equals the decomposition group of $w$ over $v$.  Let $\Pi$ be any cuspidal automorphic representation of $\mathbf{G}_{n,K}$ such that $\Pi_v \isoarrow \pi$, and let $\Pi'$ denote the base change of $\Pi$ to $\mathbf{G}_{n,K'}$ guaranteed by Theorem \ref{ACBC}.  Then 
$\Pi'_w \isoarrow \pi'$.  In particular, the local component of $\Pi'$ at $w$ depends only on $\Pi_v$.

\item[(b)]  Let $\alpha:  Gal(F'/F) \ra C^\times$ be a non-trivial character.  Then $\pi'$ is supercuspidal if and only if $\pi\otimes \alpha \not\equiv \pi$.  If $\pi\otimes \alpha \equiv \pi$ then $q$ divides $n$; letting $d = n/q$, there is a supercuspidal  representation $\pi_0$ of $GL(d,F')$ such that 
$$\pi' \isoarrow \pi_0 \boxplus \pi_0^{\tau} \boxplus \dots \boxplus \pi_0^{\tau^{q-1}},$$
where $\tau \in Gal(F'/F)$ is any non-trivial element, and $\boxplus$ is the Langlands sum.  Conversely, if $\pi_0$ is a supercuspidal representation of $GL(d,F')$ such that $\pi_0 \not\equiv \pi_0^{\tau}$ for any non trivial $\tau \in Gal(F'/F)$, then the Langlands sum above descends to a supercuspidal representation of $GL(n,F)$.

\item[(c)]  Suppose $\pi'$ is supercuspidal in (b) above.  Suppose $\sigma$ is an irreducible admissible representation of $GL(n,F)$ such that $BC_{F'/F}(\sigma) \isoarrow \pi'$.  Then there is an integer $a$ such that $\sigma \isoarrow \pi \otimes \alpha^a\circ \det$, where $\alpha$ is as in (b).
\end{itemize}



\end{hyp}

As noted in the footnotes to the introduction, Hypotheses \ref{ACBC} and \ref{ACBCloc} can be proved without use of the trace formula, but that would defeat the
purpose of this paper.

Other results of \cite{AC} will be used in the course of the discussion, and they will be listed at the end of the following section.  We want to avoid using the full strength of \cite{AC}, in fact.
The strong multiplicity one theorem for $GL(n)$, and the more general classification theorem of Jacquet and Shalika, guarantees that $\Pi''$ is uniquely determined by (a) of Hypothesis \ref{ACBC}.  This implies in particular that, for any place $v \in |X_{K'}|$, there is a map associating $\Pi''_w$ to $\Pi'_v$, whether or not $\Pi'_v$ is ramified.  It is by no means obvious that $\Pi''_w$ is independent of the global representation $\Pi'$; Arthur and Clozel prove that this is the case, but we prefer not to include this in the hypotheses, because the situation for more general groups is more complicated.  Thus we introduce the following ad hoc definition:

\begin{defn}\label{BCcontingent} Let $F$ be a local field of characteristic $p > 0$, and let $\pi$ be an irreducible representation of $GL(n,F)$.  Let $K' = k(X')$ be a global function field and let $v \in |X'|$ be a place of $K'$ that admits an isomorphism $K'_v \isoarrow F$.   Let $\Pi'$ be a cuspidal automorphic representation of $GL(n)_{K'}$ such that $\Pi'_v \isoarrow \pi$.  Let $F'/F$ be a cyclic extension of prime degree $q$, and let $K''/K'$ be an extension of degree $q$ in which $v$ is inert, and such that, letting $w$ denote the prime of $K''$ dividing $v$, we have an isomorphism $K''_w \isoarrow F'$.  Define
$$BC^{\Pi',K''}_{F'/F}(\pi) = BC_{K''/K'}(\Pi')_w$$
to be the local component at $w$ of $BC_{K''/K'}(\Pi')$.
\end{defn}

Note that this only defines local base change for representations that occur as local components of cuspidal automorphic representations.  This includes supercuspidal representations with central characters of finite order (see Theorem \ref{glob} below), and this suffices for our purpose.  

\subsection*{Parametrization}\label{parametrization}
In order to define the version of local base change that we need, we start with supercuspidal representations.   For the moment we work in the generality of \cite{Laf18}.  Thus let $K \subset K' \subset K''$ be as above, and let $\mathbf{G}$ be any connected reductive algebraic group over $K$.  We fix a point $v \in |X_{K'}|$ and let $F' = K'_v$ denote the corresponding local field, $G = \mathbf{G}(F)$.  Let $\pi_v$ be a supercuspidal representation of $G(F')$ with  central character of finite order.  

\begin{thm}[Gan-Lomel\'i]\cite{GLo}\label{glob}  There a cuspidal automorphic representation $\pi$ of $\mathbf{G}_{K'}$ with central character of finite order, such that the local component $\pi_v$ is the given representation.
\end{thm}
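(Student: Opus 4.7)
The plan is to establish a globalization via a simple form of the Arthur--Selberg trace formula for $\mathbf{G}$ over $K'$, exploiting the fact that supercuspidal representations can be isolated both geometrically (via matrix coefficients) and spectrally (via projection onto a single matrix coefficient). The approach follows the pattern established by Deligne--Kazhdan (for inner forms of $GL(n)$) and extended to general reductive groups over function fields.

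First, I would fix an auxiliary place $v_0 \neq v$ of $K'$ together with an explicit supercuspidal representation $\sigma_{v_0}$ of $\mathbf{G}(K'_{v_0})$---for instance, a depth-zero supercuspidal attached to a regular elliptic character of a maximal torus in the reductive quotient of a parahoric, constructed via Moy--Prasad or Morris theory. Choose $f_{v_0}$ to be a truncated matrix coefficient of $\sigma_{v_0}$, and $f_v$ to be a truncated matrix coefficient of the prescribed $\pi_v$. By Harish-Chandra's orthogonality relations, convolution with $f_{v_0}$ annihilates every automorphic representation whose component at $v_0$ is not of the form $\sigma_{v_0}$ up to unramified twist; in particular the spectral side picks up only cuspidal representations, and on the geometric side only elliptic regular semisimple classes contribute, yielding a simple trace formula of the shape
$$\sum_{\Pi} m(\Pi)\,\mathrm{tr}\,\Pi(f) \;=\; \sum_{\gamma} \mathrm{vol}(\mathbf{G}_\gamma(K')\backslash \mathbf{G}_\gamma(\ad_{K'})) \cdot O_\gamma(f),$$
where $f = \prod_w f_w$ and $f_w = \mathbf{1}_{K_w}$ is the characteristic function of a compact open subgroup at the remaining places, hyperspecial for almost every $w$.

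The next step is to show that both sides can be arranged to be nonzero and to force the desired local component at $v$. On the geometric side, by shrinking the $K_w$ at finitely many auxiliary places, the elliptic sum collapses to the contribution of central elements, which is strictly positive. On the spectral side, $\pi_v(f_v)$ is proportional to a rank-one projection, so any contributing $\Pi$ satisfies $\Pi_v \cong \pi_v\otimes \chi$ for some unramified twist $\chi$; a finite twisting argument over the (finitely generated) group of unramified characters then extracts a $\Pi$ with $\Pi_v \cong \pi_v$. To enforce that $\Pi$ has finite-order central character restricting to $\omega_{\pi_v}$ at $v$, one works throughout with the $\omega$-equivariant trace formula, where $\omega$ is a finite-order idele class character of $Z(\ad_{K'})/Z(K')$ chosen to match $\omega_{\pi_v}$; existence of such $\omega$ follows from the structure of the adelic class group of $Z$, combined with weak approximation.

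The main obstacle is exactly the one Harris flags earlier in the text: a general invariant trace formula for reductive groups over function fields is not available in the literature in the form needed. Gan and Lomel\'i sidestep this by restricting to the simple trace formula, in which unipotent contributions, continuous spectrum, and the delicate transfer terms all vanish identically thanks to the choice of $f_{v_0}$ and $f_v$; this reduces the required analytic input to what was already established by Deligne--Kazhdan and Arthur for the cuspidal-supported part of the spectrum. A secondary issue is the construction of the auxiliary supercuspidal $\sigma_{v_0}$ for arbitrary $\mathbf{G}$ in positive characteristic, but depth-zero methods applied at a place of good reduction for $\mathbf{G}$ are sufficient.
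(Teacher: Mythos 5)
First, note that the paper offers no proof of Theorem \ref{glob} at all: it is imported wholesale from Gan--Lomel\'i \cite{GLo}, and the text immediately following the statement records that \cite{GLo} in fact proves a sharper theorem, with control of the ramification of the globalization away from $v$. So the comparison has to be with the proof in \cite{GLo}, and there your sketch takes a genuinely different road. Gan and Lomel\'i do not run a trace formula: their globalization is obtained by the classical Poincar\'e-series technique (in the style of Henniart--Vign\'eras), building a function on $\mathbf{G}(\ad_{K'})$ from a truncated matrix coefficient of $\pi_v$ and suitable test data at the other places, and showing by a support/nonvanishing argument that the resulting Poincar\'e series is a nonzero cusp form with finite-order central character. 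The whole point of that construction---and the reason Harris can use Theorem \ref{glob} as an unconditional input while every trace-formula statement in this article appears as an explicit Hypothesis (\ref{ACBC}, \ref{ACBCloc}, \ref{simpleBC}, \ref{labesse})---is that it avoids the trace formula over function fields entirely.

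This is also where your proposal has a genuine gap rather than a cosmetic difference. The step you yourself label the ``main obstacle'' is the heart of the matter: a simple trace formula for an arbitrary connected reductive group over a function field, with the spectral side reduced to the cuspidal spectrum and the geometric side to elliptic terms by a two-place cuspidal test function, is not available in the literature in the generality you need; Deligne--Kazhdan-type simple trace formulas in the function-field setting are established essentially for $GL(n)$ and its inner forms, and Arthur's invariant trace formula is a number-field result. Your assertion that ``Gan and Lomel\'i sidestep this by restricting to the simple trace formula'' mischaracterizes \cite{GLo}, and as written your argument establishes the globalization only conditionally on exactly the analytic machinery this paper is structured to avoid. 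The remaining ingredients of your sketch (the auxiliary depth-zero supercuspidal at $v_0$, shrinking supports to isolate the central contribution, $\omega$-equivariance with a finite-order global extension of $\omega_{\pi_v}$, and removal of the unramified twist---which is in fact unnecessary once the central character is fixed, by Schur orthogonality modulo the center) are standard and fine, but they do not supply the missing trace formula input.
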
  

Gan and Lomel\'i prove a much  more refined result, with strong restrictions on the ramification of $\pi$ away from $v$, but this simple version will suffice for our purposes.   Before we return to the case of $GL(n)$, we quote the main results of \cite{Laf18} and \cite{GLa}.  For this, let $F$ be any of the fields $K, K', K'', K'_v, K''_w$, and let ${}^LG = \hat{G} \rtimes W_F$ be the $L$-group over $F$, where $\hat{G}$ is the Langlands dual group of $G$, with coefficients in the field $C$, and $W_F$ is the (global or local) Weil group.  

If $F$ is a local field, we let $G = \mathbf{G}(F)$.  If $F'/F$ is any extension,  we let $\CG(G,F')$ denote the set of irreducible admissible representations of $G(F')$.  We let $\CA(G,F')$ denote the set of {\it admissible Langlands parameters}
$$\phi:  WD_{F'} \ra {}^LG$$
with the usual properties; in particular, the restriction of $\phi$ to $W_{F'}$, composed with the tautological map ${}^LG \ra W_F$, is the identity map from $W_{F'}$ to the subgroup $W_{F'} \subset W_F$.   Let $\CA(G,F')^{ss}$ denote the set of equivalence classes of (Frobenius semisimple, smooth) homomorphisms of the Weil group of $F'$ to $ {}^LG$.  There is a natural map $\CA(G,F') \ra \CA(G,F')^{ss}$ given by forgetting the image of the nilpotent operator $N$.

If $F$ is one of the global fields $K, K', K''$, we let $\CG_0(\mathbf{G}_F)$ denote the set of cuspidal automorphic representations of $\mathbf{G}_F$ with central character of finite order.  We let 
$\CA^{ss}(\mathbf{G}_F)$ denote the set of equivalence classes of compatible families of completely reducible $\ell$-adic representations, for $\ell \neq p$:
$$\rho_\ell:  Gal(\bar{F}/F) \ra {}^LG(\Qlb).$$
The term {\it completely reducible} is understood to mean that if $\rho_\ell(Gal(\bar{F}/F))\cap \hat{G}(\Qlb)$ is contained in a parabolic subgroup $P \subset \hat{G}(\Qlb)$, then it is contained in a Levi subgroup of $P$.  

If $\nu:   G \ra \Gm$ is an algebraic character, with $\Gm$ here designating the split $1$-dimensional torus over $F$, the theory of $L$-groups provides a dual character $\hat{\nu}:  \Gm \ra \hat{G} \subset {}^LG$.  If $Z$ is the center of $G$, and if $c:  \Gm \ra Z \subset G$ is a homomorphism, then the theory of $L$-groups provides an algebraic character ${}^Lc:  {}^LG \ra \Gm$.

\begin{thm}\label{paramVL} 
 (i) \cite{Laf18}[Th\'eor\`eme 0.1]    There is a map
$$\CL^{ss}_{K} = \CL^{ss}_{\mathbf{G},K}:  \CG_0(\mathbf{G}_K) \ra \CA^{ss}(\mathbf{G}_K)$$
with the following property:  if $v$ is a place of $K$ and $\Pi \in \CG_0(\mathbf{G}_K)$ is a cuspidal automorphic representation such that $\Pi_v$ is unramified, then $\CL^{ss}_K(\Pi)$ is unramified at $v$, and
$\CL^{ss}_K(\Pi) ~|_{W_{K_v}}$ is the Satake parameter of $\Pi_v$.\footnote{Here and below we will mainly refer to the restriction of a global Galois parameter to the local Weil group, rather than to the local Galois group, because the unramified Langlands correspondence relates spherical representations to unramified homomorphisms of the local Weil group to the $L$-group.   But the difference is inessential.}

(ii)  Suppose  $\nu:   \mathbf{G} \ra \Gm$ is an algebraic character.  Suppose $$\chi:  \ad_K^{\times}/K^\times  \ra C^\times = GL(1,C)$$ is any continuous character of finite order.  For any $\Pi \in  \CG_0(\mathbf{G}_K)$, let $\Pi\otimes \chi \circ \nu$ denote the twist of $\Pi$ by the character $\chi\circ\nu$ of $\mathbf{G}(\ad)$. Then 
$$\CL^{ss}(\Pi\otimes \chi\circ\nu) = \CL^{ss}(\Pi)\cdot \hat{\nu}\hat{\chi}$$
where $\hat{\nu}$ is as above and where $\hat{\chi}: Gal(\bar{K}/K)^{ab} \ra GL(1,C)$ is the character corresponding to $\chi$ by local class field theory. 

(iii) \cite{GLa}[Th\'eor\`eme 0.1]  Let  $v$ be a place of of $K$ and let $F = K_v$.  Then the semisimplification of the restriction of $\CL^{ss}_K(\Pi)$ to $W_{K_v}$ depends only on $F$ and $\Pi_v$ and not on the rest of the automorphic representation $\Pi$, nor on the global field $K$.  

\end{thm}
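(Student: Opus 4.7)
Statements (i) and (iii) are, respectively, the main theorems of \cite{Laf18} and \cite{GLa}; my plan is to indicate how each construction proceeds and how (ii) falls out as a by-product. The heart of (i) is V. Lafforgue's theory of \emph{excursion operators}. First I would introduce, for each finite set $I$, the moduli stack $\mathrm{Sht}_I$ of $\mathbf{G}$-shtukas with legs indexed by $I$, and consider the cuspidal part $H^{*}_c(\mathrm{Sht}_I)^{\mathrm{cusp}}$ of its $\ell$-adic cohomology. For each tuple $(f, (\gamma_i)_{i\in I})$ with $f$ a $\hat{G}$-invariant function on $\hat{G}^I$ and $\gamma_i \in \mathrm{Gal}(\bar{K}/K)$, one builds an excursion operator $S_{I,f,(\gamma_i)}$ by composing creation and fusion morphisms on $\mathrm{Sht}_I$ with partial Frobenius. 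One then checks, via the geometric Satake equivalence and Drinfeld's partial Frobenius machinery, that the $S_{I,f,(\gamma_i)}$ mutually commute, commute with the Hecke algebra at unramified places, and satisfy the relations needed to package them into a $\hat{G}$-pseudocharacter of $\mathrm{Gal}(\bar{K}/K)$.

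Given a cuspidal $\Pi \in \CG_0(\mathbf{G}_K)$ with central character of finite order, the $\Pi_f$-isotypic component of $H^{*}_c(\mathrm{Sht}_\emptyset)^{\mathrm{cusp}}$ is nonzero after a suitable localization, and the excursion operators act on it by scalars. These scalars define a continuous $\hat{G}(\Qlb)$-valued pseudocharacter, which by Lafforgue's Tannakian reconstruction integrates to a completely reducible homomorphism $\rho_\ell \colon \mathrm{Gal}(\bar{K}/K) \to {}^LG(\Qlb)$. Unramifiedness outside the finite set of bad primes of $\Pi$ is automatic from the construction, and the Satake compatibility at unramified places follows from comparing the excursion operator $S_{\{1\}, f, (\mathrm{Frob}_v)}$ at a good place with the usual unramified Hecke action via geometric Satake. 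The $\ell$-independence required to form a compatible family reduces to $\ell$-independence of the excursion scalars, which lie in a number field depending only on $\Pi$.

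For (ii), my plan is to exploit the Satake compatibility. At any place $v$ where both $\Pi$ and $\chi$ are unramified, the Satake parameter of $(\Pi\otimes \chi\circ\nu)_v$ is visibly the Satake parameter of $\Pi_v$ multiplied by the scalar $\hat{\nu}\hat{\chi}(\mathrm{Frob}_v)$. Since these places have density one and both sides of the claimed identity are semisimple continuous Galois representations to ${}^LG(\Qlb)$, Chebotarev density together with semisimple rigidity yields the global equality $\CL^{ss}(\Pi \otimes \chi\circ\nu) = \CL^{ss}(\Pi)\cdot \hat{\nu}\hat{\chi}$.

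For (iii), the Genestier--Lafforgue local parametrization, I would build a local analogue of the excursion machinery, working in the cohomology of local shtuka spaces (or, equivalently, via the Beilinson--Drinfeld affine Grassmannian at a single point $v$). Local excursion operators act on every smooth irreducible representation of $\mathbf{G}(K_v)$ and produce a semisimple $W_{K_v}$-parameter $\phi_{v}(\pi_v)$. The compatibility assertion then reduces, by a local--global principle for shtuka cohomology (essentially a K\"unneth/product decomposition of $\mathrm{Sht}_I$ near $v$), to showing that the restriction of the global excursion scalars to paths $\gamma_i$ lying in a decomposition group at $v$ recovers the local excursion scalars computed on $\pi_v = \Pi_v$. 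The hardest step is to show that $\phi_v(\pi_v)$ is genuinely independent of the global lift $\Pi$: two different globalizations guaranteed by Theorem \ref{glob} must produce parameters whose restrictions to $W_{K_v}$ coincide. This independence is the technical core of \cite{GLa} and is carried out by a refined analysis of the local shtuka cohomology at $v$ in families.
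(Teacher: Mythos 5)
Your proposal is correct and follows essentially the same route as the paper: the paper's proof simply cites \cite{Laf18} for (i) and \cite{GLa} for (iii) (whose contents your excursion-operator and local-shtuka sketches accurately summarize), and proves (ii) exactly as you do, by checking the identity at the unramified places via Satake compatibility and then invoking Chebotarev density.
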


\begin{proof}  Point (i) is the main result of \cite{Laf18}, and point (iii) is the main result of \cite{GLa}. Point (ii) follows from point (i):  it is true locally at almost all places, by the compatibility with the unramified local correspondence, and thus it is true everywhere by Chebotarev density. 
\end{proof}

The following corollary is implicit in \cite{GLa}.

\begin{cor}\label{compatibilities} Let $F$ be a local field of characteristic $p$.  There is a map 
$$\CL^{ss}_{F}: \CG(G,F) \ra  \CA(G,F)^{ss}$$
with the following properties.  

(i)  If $\pi \in \CG(G,F)$ is an unramified principal series representation, then $\CL^{ss}_{F}(\pi)$ is its Satake parameter.

(ii) More generally, $\CL^{ss}_{F}$ is compatible with parabolic induction, in the following sense. Suppose $M \subset G$ is the Levi subgroup of an $F$-rational parabolic subgroup $P$ of $G$, and let $i_M:  {}^LM \ra {}^LG$ be the corresponding morphism.  Let $\sigma \in \CG(M,F)$ and let $\pi$ be an irreducible constituent of $Ind_{P(F)}^{G(F)}\sigma$ (normalized induction).  Then 
 $$\CL^{ss}_{G,F}(\pi) = i_M(\CL^{ss}_{M,F}(\sigma)).$$
 
 (iii)  The map $\CL^{ss}_{F'}$ is compatible with twisting by characters, in the following sense.  Let $\nu:   G \ra \Gm$ be an algebraic character.  Suppose $\chi:  F^{\prime,\times} \ra C^{\times}$ is a continuous character, and $\hat{\chi}:  W_{F'} \ra GL(1,C)$ is the representation corresponding to $\chi$ by local class field theory.  Then for any $\pi \in \CG(G,F')$, we have
 $$\CL^{ss}_{F'}(\pi \otimes \chi\circ \nu) = \CL^{ss}_{F'}(\pi)\cdot \hat{\nu}_{\hat{\chi}}:  W_{F'} \ra {}^LG.$$
 
 (iv)  The map $\CL^{ss}_{F}$ is compatible with central  characters, in the following sense.  Let $c:  \Gm \ra Z \subset G$ and ${}^Lc$ be as in the discussion above.  Suppose 
 $\pi \in \CG(G,F)$ has central character $\xi_\pi$, and let $\xi_{c,\pi} = \xi_{\pi}\circ c:  Z(F) \ra C^{\times}$.  Then ${}^Lc \circ \CL^{ss}_{F}(\pi)$ is the character attached to $\xi_{c,\pi}$ by local class field theory.
 
 (v)  Suppose $K = k(X)$ is a global function field and $v \in |X|$ is a place such that $F \isoarrow K_v$.  Then $\CL^{ss}_{F}$ is compatible with the map $\CL^{ss}_{K}$:  if $\Pi$ is any cuspidal automorphic representation of $\mathbf{G}_K$ with finite central character, then
 $\CL^{ss}_F(\Pi_v)$ is equivalent to the semisimplification of  $\CL^{ss}_K(\Pi) ~|_{W_{F_v}}$.
\end{cor}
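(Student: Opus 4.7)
The plan is to construct $\CL^{ss}_F$ on $\CG(G,F)$ by globalizing and then to read off each compatibility from the corresponding global assertion in Theorem \ref{paramVL}. The key enabling fact is Theorem \ref{paramVL}(iii), which says that the restriction $\CL^{ss}_K(\Pi)|_{W_{K_v}}$ depends only on the local component $\Pi_v$ and not on the ambient global data; this is what makes the globalization procedure well-defined.

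For the construction of $\CL^{ss}_F$ itself, I would proceed in two stages. First, for a supercuspidal $\pi \in \CG(G,F)$ with central character of finite order, apply Theorem \ref{glob} to choose a curve $X$ with function field $K$, a place $v \in |X|$ satisfying $K_v \isoarrow F$, and a $\Pi \in \CG_0(\mathbf{G}_K)$ with $\Pi_v \isoarrow \pi$; define $\CL^{ss}_F(\pi)$ to be the semisimplification of $\CL^{ss}_K(\Pi)|_{W_{K_v}}$. Theorem \ref{paramVL}(iii) makes this independent of the chosen globalization. For a supercuspidal $\pi$ whose central character has infinite order, twist by a suitable $\chi\circ\nu$ to reduce to the finite-central-character case, using Theorem \ref{paramVL}(ii) to check that the resulting definition is consistent. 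Second, for general $\pi \in \CG(G,F)$, realize $\pi$ as an irreducible constituent of $Ind_{P(F)}^{G(F)}\sigma$ for a supercuspidal $\sigma$ on the Levi $M$ of a parabolic $P \subset G$, and set $\CL^{ss}_F(\pi) := i_M(\CL^{ss}_{M,F}(\sigma))$; independence of the choice of $(P,M,\sigma)$ follows from the uniqueness of cuspidal support up to association.

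Each compatibility then follows by transferring the local question to the global setting. For (i), Theorem \ref{glob} permits one to choose $\Pi$ unramified at $v$ whenever $\pi$ is unramified, and Theorem \ref{paramVL}(i) identifies the resulting restriction with the Satake parameter. Property (ii) is built into the definition when $\sigma$ is supercuspidal, and extends to arbitrary $\sigma$ by transitivity of parabolic induction combined with transitivity of the $L$-group embeddings. Property (iii) is the local incarnation of Theorem \ref{paramVL}(ii): globalize $\pi$ to $\Pi$ and $\chi$ to an idele class character $\tilde\chi$ whose component at $v$ is $\chi$, apply (ii) globally, and restrict to $W_{K_v}$. Property (iv) is analogous, using that composing the globalized parameter with ${}^Lc$ produces an abelian Galois character which, by compatibility of the Satake isomorphism with central characters at almost all unramified places together with Chebotarev density, corresponds to the Hecke character $\xi_\Pi\circ c$. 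Property (v) is built into the definition.

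The main obstacle is to verify that this globalization-based construction agrees, at every $\pi$, with the intrinsic local parametrization produced in \cite{GLa}. Genestier and Lafforgue attach a parameter directly to every irreducible admissible representation via local excursion operators on $G(F)$, so one cannot simply take the second stage of the construction above as the definition of $\CL^{ss}_F$; rather, one must check that the local excursion algebra of $G$ restricts to that of $M$ under parabolic induction in the expected way, thereby forcing (ii) to hold for the intrinsic construction. This is the one step that is not purely formal, and is presumably the reason the corollary is labelled as only ``implicit'' in \cite{GLa}. Once this compatibility is granted, the remaining properties are consequences of the formal yoga of globalization combined with Theorem \ref{paramVL}.
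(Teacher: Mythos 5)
Your proposal follows essentially the same route as the paper: globalize a supercuspidal with finite central character via Theorem \ref{glob}, define $\CL^{ss}_F$ as the semisimplified restriction of $\CL^{ss}_K(\Pi)$ (well-defined by Theorem \ref{paramVL}(iii)), twist to remove the finite-order hypothesis using \ref{paramVL}(ii), extend to all of $\CG(G,F)$ through cuspidal support, and obtain (iii)--(iv) by Chebotarev. The one non-formal ingredient you flag (compatibility of the local parametrization with parabolic induction, needed to reconcile the cuspidal-support definition with property (v)) is exactly what the paper supplies by citing the final assertion of Théorème 0.1 of \cite{GLa}, so the two arguments coincide in substance.
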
  

\begin{proof}  Suppose $\pi \in \CG(G,F)$ is a supercuspidal representation with central character of finite order.  Let $K = k(X)$ be a global function field with a place $v$ such that $F \isoarrow K_v$.  By Theorem \ref{glob} there is a cuspidal automorphic representation $\Pi$ with central character of finite order, such that $\Pi_v \equiv \pi$.  Then we define
$\CL^{ss}_F(\pi)$ to be the semisimplification of  $\CL^{ss}_K(\Pi) ~|_{W_{F_v}}$, as required in point (v).  It follows from point (iii) of Theorem \ref{paramVL} that this is well defined.

More generally, if $\pi \in \CG(G,F)$ is any supercuspidal representation, we may find a $C$-valued character $\chi$ of $G(F)$ such that $\pi\otimes \chi$ has central character of finite order.  We apply the previous step to $\pi\otimes \chi$ and define
$$\CL^{ss}_F(\pi) = \CL^{ss}_F(\pi\otimes \chi)\otimes \hat{\chi}^{-1}.$$
That this is well-defined follows from point (ii) of Theorem \ref{paramVL}.

Now suppose $\pi$ is an irreducible constituent of $Ind_{P(F)}^{G(F)}\sigma$ and define $\CL^{ss}_{G,F}(\pi)$ by (ii).  We need to show that this definition is compatible with (v); but this follows from the final assertion of Theorem 0.1 of \cite{GLa}.  

Finally, points (iii) and (iv) are true at all unramified places, by construction; hence they are true everywhere by Chebotarev density.
\end{proof}

The following Theorem contains the main local-global compatibility property of base change.

\begin{thm}\cite{Laf18,GLa}\label{chtouca}  Let $F$ be a local field of characteristic $p$, and let $F'/F$ be a cyclic extension of prime degree $q$.  Let $\pi$ be an irreducible representation of $GL(n,F)$ and let $K'$, $v$, $\Pi'$, and $K''$ be as in Definition \ref{BCcontingent}.  

Then
$$\CL^{ss}_{F'}(BC^{\Pi',K''}_{F'/F}(\pi)) = \CL^{ss}_F(\pi)~|_{W_{F'}}.$$
\end{thm}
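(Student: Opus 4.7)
The plan is to globalize and deduce the local identity from a global one via Chebotarev density. First, attach to $\Pi'$ and to $\Pi'' = BC_{K''/K'}(\Pi')$ the global parameters of Theorem \ref{paramVL}(i), namely semisimple compatible families
$$\rho' = \CL^{ss}_{K'}(\Pi'):  Gal(\bar K/K') \ra GL(n, \Qlb), \qquad \rho'' = \CL^{ss}_{K''}(\Pi''):  Gal(\bar K/K'') \ra GL(n, \Qlb).$$
The central intermediate claim to prove is that
$$\rho'' ~\cong~ \rho'~|_{Gal(\bar K/K'')}.$$

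To verify this, I would test the two sides place by place. Choose any place $u$ of $K''$ lying over a place $v'$ of $K'$ that is unramified in $K''/K'$, where $\Pi'_{v'}$ (and hence, by Hypothesis \ref{ACBC}(a), $\Pi''_u$) is unramified. Theorem \ref{paramVL}(i) identifies $\rho''(\mathrm{Frob}_u)$ with the Satake parameter of $\Pi''_u$, while $\rho'~|_{Gal(\bar K/K'')}$ sends $\mathrm{Frob}_u$ to $\rho'(\mathrm{Frob}_{v'})^{f(u/v')}$, which is conjugate to the $f(u/v')$-th power of the Satake parameter of $\Pi'_{v'}$. The unramified compatibility in Hypothesis \ref{ACBC}(a) is exactly the statement that these two Satake parameters agree. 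Since both $\rho''$ and $\rho'~|_{Gal(\bar K/K'')}$ are semisimple compatible families, agreement of their Frobenius traces on a set of places of density one implies equivalence by Brauer-Nesbitt and Chebotarev density.

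It remains only to restrict the global identity to the decomposition group at $w$. By Corollary \ref{compatibilities}(v), the semisimplification of $\rho''~|_{W_{F'}}$ equals $\CL^{ss}_{F'}(\Pi''_w)$, and by Definition \ref{BCcontingent} this is $\CL^{ss}_{F'}(BC^{\Pi',K''}_{F'/F}(\pi))$; similarly, the semisimplification of $\rho'~|_{W_F}$ equals $\CL^{ss}_F(\Pi'_v) = \CL^{ss}_F(\pi)$. Restricting the equivalence $\rho'' \cong \rho'~|_{Gal(\bar K/K'')}$ to the subgroup $W_{F'}$ therefore gives exactly
$$\CL^{ss}_{F'}(BC^{\Pi',K''}_{F'/F}(\pi)) = \CL^{ss}_F(\pi)~|_{W_{F'}}.$$

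The argument is almost formal once Theorem \ref{paramVL} and Hypothesis \ref{ACBC} are in hand; the only point requiring a little care is the bookkeeping of Frobenii at places $v'$ of $K'$ that need not be inert in $K''/K'$, and the verification that the unramified base change recipe of Hypothesis \ref{ACBC}(a) is consistent with passing to higher powers of Frobenius. I expect no real obstacle here: the unramified local Langlands correspondence is multiplicative in the obvious sense with respect to unramified field extensions, so $\mathrm{Frob}_u$ acts on an unramified parameter as the $f(u/v')$-th power of $\mathrm{Frob}_{v'}$ on both sides of the proposed equivalence.
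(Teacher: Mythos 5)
Your proposal is correct and follows essentially the same route as the paper: the paper's proof is exactly the unramified comparison via Theorem \ref{paramVL}(i) (your place-by-place check using Hypothesis \ref{ACBC}(a)) followed by Chebotarev density, with the restriction to the decomposition group handled by Corollary \ref{compatibilities}(v). You have simply written out in detail what the paper compresses into two sentences, including the harmless Frobenius-power bookkeeping at non-inert places.
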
 

\begin{proof}  If $\pi$ is spherical then this follows from (i) of Theorem \ref{paramVL}.   The general case then follows from Chebotarev's density theorem.
\end{proof}

\subsection*{Proof of Theorem \ref{gln}}

Now we return to the case of $GL(n)$.  Because Definition \ref{BCcontingent} of base change is not purely local, we need to reformulate Theorem \ref{gln} to take this into account. 

\begin{thm}\label{gln1}  Let $F_0$ be a non-archimedean local field of positive characteristic $p$, $n$ a positive integer, and $\pi_0$ a supercuspidal representation of $GL(n,F_0)$.   Choose  a global function field $K_0$\footnote{In what follows we are using Theorem 5 of Chapter X of \cite{AT}.  If we also want to work over number fields we would have to take care to avoid the Grunwald-Wang obstruction at primes dividing $2$} with a place $v_0$ such that $K_{0,v_0} \isoarrow F_0$, and a cuspidal automorphic representation $\Pi_0$ of $GL(n,\ad_{K_0})$, as in Theorem \ref{glob}, such that
$\Pi_{0,v_0} \isoarrow \pi_0$.  
There is a finite sequence of cyclic extensions of prime degree, $F_0 \subset F_1 \subset \dots \subset F_r$, with the following property.  Let $K_0 \subset K_1 \subset \dots \subset K_r$ be any sequence of cyclic extensions, with $v_0$ inert in $K_r$, such that, for each $i$, $K_{i,v_i} \equiv F_i$, where $v_i$ is the prime of $K_i$ above $v$.  Define $\Pi_i$ inductively as $BC_{K_i/K_{i-1}}(\Pi_{i-1})$, and let  $\pi_i = \Pi_{i,v_i}$.  Then  $\pi_r$ is not supercuspidal.
\end{thm}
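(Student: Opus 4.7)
The plan is to extract enough information from the $n$-dimensional semisimple Genestier--Lafforgue parameter $\sigma_0 = \CL^{ss}_{F_0}(\pi_0)$, together with its iterated restrictions $\sigma_i := \sigma_0|_{W_{F_i}} = \CL^{ss}_{F_i}(\pi_i)$ guaranteed by Theorem \ref{chtouca}, to drive the construction of the tower. Since $\sigma_0$ is a genuine semisimple continuous representation (not merely Frobenius semisimple with a companion nilpotent operator), Grothendieck's quasi-unipotence theorem forces the inertia image $\sigma_0(I_{F_0})$ to be a finite subgroup of $GL(n,C)$. Hence there is a finite Galois extension $F'/F_0$ over which $\sigma_0|_{W_{F'}}$ is unramified. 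Because the absolute Galois group of a nonarchimedean local field is pro-solvable, $F'/F_0$ is itself solvable and can be refined to a tower of cyclic prime-degree extensions; using Theorem~5 of Chapter~X of \cite{AT}, I would lift this local tower to a compatible global tower $K_0 \subset K_1 \subset \cdots \subset K_s$ with $v_0$ inert throughout and completions $F_0 \subset \cdots \subset F_s \supseteq F'$.

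I would then pass this through the global base-change machinery of Hypothesis \ref{ACBC} by setting $\Pi_i = BC_{K_i/K_{i-1}}(\Pi_{i-1})$. If at some intermediate stage $\Pi_i$ fails to be cuspidal, Hypothesis \ref{ACBC}(b) writes $\Pi_i$ explicitly as a Langlands sum of cuspidals of $GL(d)$ with $d \mid n$ and $d<n$; then $\pi_i = \Pi_{i,v_i}$ is a proper isobaric induction and in particular not supercuspidal, and we stop with $r = i$. Otherwise every $\Pi_i$ remains cuspidal, $\sigma_s$ is unramified, and hence decomposes as a direct sum $\bigoplus_{j=1}^n \chi_j$ of unramified characters of $W_{F_s}$, specified up to permutation by the multiset of Frobenius eigenvalues $\{\alpha_1,\dots,\alpha_n\} \subset C^{\times}$.

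To conclude, I would take one further cyclic prime-degree extension $F_{s+1}/F_s$, of prime degree $q \mid n$, together with the nontrivial character $\alpha \in \widehat{Gal(K_{s+1}/K_s)}$, chosen so that $\pi_s \cong \pi_s \otimes (\alpha \circ \det)$. Hypothesis \ref{ACBCloc}(b) then realizes $\pi_{s+1}$ as a Langlands sum of $q$ copies of a supercuspidal of $GL(n/q, F_{s+1})$, which is not supercuspidal, completing the proof with $r = s+1$. To produce such an $\alpha$ I would combine the character-twist compatibility of Corollary \ref{compatibilities}(iii), which converts $\pi_s \otimes \alpha \cong \pi_s$ into the parameter identity $\sigma_s \cdot \hat\alpha \cong \sigma_s$, with the fiber description in Hypothesis \ref{ACBCloc}(c) and strong multiplicity one for $GL(n)$, which together allow one to descend parameter identities to representation identities by routing through the compatible global cuspidal $\Pi_s$. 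Concretely, I would enlarge the unramified part of the tower until the multiset $\{\alpha_j\}$---which consists of Weil numbers whose product has finite order thanks to the finite-order central-character condition in Theorem \ref{glob}---admits a nontrivial cyclic symmetry of prime order $q$ realized by an unramified root-of-unity character $\hat\alpha$.

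The hard part will be this final step: arranging a nontrivial cyclic symmetry of the Frobenius-eigenvalue multiset and then descending the resulting parameter identity $\sigma_s \cdot \hat\alpha \cong \sigma_s$ to the representation identity $\pi_s \otimes \alpha \cong \pi_s$. Because the Genestier--Lafforgue map has not been shown to be injective, the descent cannot go through $\CL^{ss}$ directly and must be routed through the global tower, using the Gan--Lomel\'i globalization, the fiber structure of base change in Hypothesis \ref{ACBCloc}(c), and strong multiplicity one for $GL(n)$ in an essential way.
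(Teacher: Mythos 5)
Your first half is exactly the paper's: both start from the semisimple Genestier--Lafforgue parameter $\CL^{ss}_{F_0}(\pi_0)$, invoke Grothendieck's quasi-unipotence theorem to see that the inertia image is finite, and use pro-solvability of the local Galois group to refine the resulting finite extension to a cyclic prime-degree tower $F_0\subset\dots\subset F_r$ over which the parameter becomes unramified. The compatibilities you quote (Theorem \ref{chtouca}, Corollary \ref{compatibilities}(v)) are indeed what carries this through the global base-change tower.

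Where you diverge from the paper is the endgame, and that is precisely where your argument has a genuine gap -- one that you flag but do not resolve, and which I believe cannot be resolved along the lines you propose. Once $\sigma_s = \CL^{ss}_{F_s}(\pi_s)$ is unramified and semisimple, you try to manufacture a further extension $F_{s+1}/F_s$ of prime degree $q$ and a cyclic character $\alpha$ with $\pi_s\otimes(\alpha\circ\det)\cong\pi_s$, so that Hypothesis \ref{ACBCloc}(b) forces $\pi_{s+1}$ to be a proper Langlands sum. There are two independent obstructions. First, ``enlarging the unramified part of the tower'' replaces each Frobenius eigenvalue $\alpha_j$ by $\alpha_j^f$ ($f$ the residue degree), and such power maps never create a new nontrivial multiplicative symmetry in a multiset that did not already have one: if the $\alpha_j$ are multiplicatively independent modulo roots of unity -- a priori entirely possible here, since all we know is that $\sigma_s$ is semisimple with central determinant of finite order -- then no unramified root-of-unity twist $\hat\alpha$ will ever satisfy $\sigma_s\cdot\hat\alpha\cong\sigma_s$, no matter how far you extend. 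Second, even granting such a parameter self-twist, passing from $\sigma_s\cdot\hat\alpha\cong\sigma_s$ to $\pi_s\otimes\alpha\cong\pi_s$ is exactly the injectivity of the local parametrization, which is not available at this stage; Hypothesis \ref{ACBCloc}(c) and strong multiplicity one describe the fiber of \emph{base change}, not of $\CL^{ss}_F$, so routing through the global tower does not supply the missing injectivity.

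The paper avoids both problems by going through the Godement--Jacquet $L$-function instead. Once $\sigma_r$ is unramified, its restriction to $W_{F_r}$ fixes a line on which it acts through some unramified character $\alpha_r$; consequently the Galois Euler factor $L_{v_r}(s,\CL^{ss}_{K_r}(\Pi_r)\otimes\alpha_r^{-1})$ has a pole at $s=0$. A global functional-equation argument, together with stability of $\gamma$-factors \cite{JS85,De73} at the auxiliary bad places, then identifies the automorphic Godement--Jacquet local factor of $\pi_r$ with this Galois factor. But a supercuspidal representation of $GL(n,F_r)$ has Godement--Jacquet local $L$-factor identically equal to $1$, so $\pi_r$ cannot be supercuspidal. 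This requires no self-twist, no injectivity of the parametrization, and no delicate control over the multiplicative structure of the Satake parameters -- only the existence of the unramified one-dimensional subquotient, which is automatic. If you want to repair your proposal, replacing the final twist-descent step with the $L$-function argument is the natural fix; the self-twist route, by contrast, seems genuinely unavailable without already knowing the local Langlands correspondence.
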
  

\begin{proof}  The proof is based on properties of the Godement-Jacquet $L$-function for $GL(n)$.   We fix a prime $\ell \neq p$ and view  $\CA^{ss}(\mathbf{G}_{K_i})$ as sets of representations of the respective Galois groups on $n$-dimensional vector spaces over $\Qlb$.  Let $F_r/F_0$ be a Galois extension such that 
$\CL^{ss}_{F_0}(\pi_0)$ restricts to a (semisimple) unramified extension of $Gal(\bar{F}_0/F_r)$.  Since Galois groups of local fields are solvable, we can find a finite sequence of cyclic extensions of prime degree $F_i/F_{i-1}$, $i = 1, \dots, r$.  Now let $K_0$,  $\Pi_0$, and $K_0 \subset K_1 \subset \dots \subset K_r$, be as in the statement of the theorem.

It follows from (v) of Corollary \ref{compatibilities} that the semisimplification of the restriction of 
$\CL^{ss}_{K_r}(\Pi_r)$ to $Gal(\bar{K_{r,v_r}}/K_{r,v_r})$ is unramified.  Note that $\CL^{ss}_{K_r}(\Pi_r)$ is a semisimple representation of the global Galois group, but its restriction to the local Galois group need not be semisimple.  Nevertheless, by Grothendieck's monodromy theorem, it follows that 
\begin{itemize}
\item The image of $\CL^{ss}_{K_r}(\Pi_r) ~|_{Gal(\bar{K}_{r,v_r}/K_{r,v_r})}$, viewed as a subgroup of $GL(n,\Qlb)$, fixes a line in $\Qlb^n$, and acts on this line by an unramified character, say $\alpha_r$.
\end{itemize}
This implies that
\begin{equation}\label{pole1}
\text{ The local Euler factor $L_{v_r}(s,\CL^{ss}_{K_r}(\Pi_r)\otimes \alpha_r^{-1})$ has a pole at $s = 0$.}
\end{equation}
Now fix a global Hecke character $\beta$ of $GL(1)_{K_r}$ and consider the Godement-Jacquet $L$-function 
$$L(s,\Pi_r,\beta) = \prod_w L_w(s,\Pi_{r,w},\beta_w).$$
There is a finite set $S$ of places $w$ of $K_r$ such that, for $w \notin S$ $\Pi_{r,w}$ is unramified, and it follows from (i) of Theorem \ref{paramVL} that 
\begin{equation}\label{localGJ}  L_w(s,\Pi_{r,w},\beta_w) = L_w(s,\CL^{ss}_{K_r}(\Pi_r)\otimes \beta_w).
\end{equation}

We assume $S$ contains $v_r$.
Moreover, for any Hecke character $\beta$, viewed alternately as an automorphic representation of $GL(1)$ or as an $\ell$-adic Galois character, $L(s,\Pi_r,\beta)$ and the Artin $L$-function $L(s,\CL^{ss}_{K_r}(\Pi_r)\otimes \beta)$ satisfy functional equations
$$L(s,\Pi_r,\beta) = \varepsilon(s,\Pi_r,\beta)\cdot L(1-s,\Pi_r^{\vee},\beta^{-1}); $$
$$L(s,\CL^{ss}_{K_r}(\Pi_r)\otimes \beta) =  \varepsilon(s,\CL^{ss}_{K_r}(\Pi_r)\otimes \beta)\cdot L(1-s,\CL^{ss}_{K_r}(\Pi_r)^{\vee}\otimes \beta^{-1}).$$
The local factors of the automorphic and Galois functional equations are equal outside $S$ by \eqref{localGJ}.  Moreover, by stability of $\gamma$-factors \cite{JS85, De73}, if $\beta$ is sufficiently ramified at all places in $S$ other than $v_r$, the local factors at such places are also equal.  It follows as in the usual argument that 
\begin{equation}\label{equals1} L_{v_r}(s,\Pi_{r,v_r},\beta_{v_r}) = L_{v_r}(s,\CL^{ss}_{K_r}(\Pi_r)\otimes \beta_{v_r}).
\end{equation}

Now we can choose a global character $\beta$ such that $\beta_{v_r} = \alpha_r^{-1}$ and such that $\beta$ is sufficiently ramified at all places in $S$ other than $v_r$.  By \eqref{pole1} and \eqref{equals1}, it follows that $L_{v_r}(s,\Pi_{r,v_r},\beta_{v_r})$ has a pole at $s = 0$.  But if $\tau$  is a supercuspidal representation of $GL(n,F_r)$, then the Godement-Jacquet local Euler factor $L(s,\tau,\chi) = 1$ for any character $\chi$.  Thus $\Pi_{r,v_r}$ is not supercuspidal.
\end{proof}

\section{Proof of the local Langlands correspondence}\label{localGLn}

Admitting Hypothesis \ref{ACBC}, Theorem \ref{gln1} can be used as the starting point of an apparently new proof (but see Footnotes 1 and 2) of the local Langlands correspondence for $GL(n)$ in the equal characteristic case.  
Indeed, the inductive arguments of section 12 of \cite{Sch13} are based entirely on this base change argument and local class field theory -- Scholze obtains the bijection after reproving Henniart's Theorem \ref{gln}, while Henniart obtained the result in the other direction, starting from his numerical correspondence.   
That the correspondence preserves $L$ and $\varepsilon$ factors of pairs is automatic by global arguments over function fields, specifically the fact that the $L$-functions of Galois representations are known to satisfy the expected functional equations; this has already been used to prove Theorem \ref{gln1}.  

	The sense in which this proof is actually {\it new} needs to be spelled out, of course.  The result has been known since \cite{LRS} and can also be derived from \cite{Laf02}, and many of the intermediate arguments used to deduce the proof are the same in all cases.    
	
	In order to convince the reader that it is possible to prove the local Langlands correspondence for $GL(n)$ without using trace formulas to count points on moduli spaces, we indicate the steps of the proof, starting from Theorem \ref{gln1}.  This will be a line-by-line review of Scholze's proof in \cite{Sch13}.    We write $\CL^{ss}_{n,F}$ for the local parametrization denoted $\CL^{ss}_{F}$ in the previous section.

	\begin{step}\label{step1}  As $n$ and $F$ vary, the parametrizations $\pi \mapsto \CL^{ss}_{n,F}(\pi)$ define a functorial extension of class field theory, in the sense of \cite{Sch13}, Theorem 12.1.	
	\end{step}
	
	\begin{proof}  There are five conditions to check.  
	\begin{enumerate}
	\item[(i)]  When $\CL_{1,F} = \CL^{ss}_{1,F}$ coincides with local class field theory. 
	\medskip
	
	This is a special case of (iii) of Theorem \ref{compatibilities}  
	
	\item[(ii)] The map $\pi \mapsto \CL^{ss}_{n,F}(\pi)$ commutes with parabolic induction in the sense that, if $\pi$ is a subquotient of a representation $I(\pi_1\otimes \dots \otimes \pi_r)$ induced from a parabolic subgroup with Levi factor $\prod_{i = 1}^r GL(n_i)$, then
	$$\CL^{ss}_{n,F}(\pi) = \oplus_{i = 1}^r \CL^{ss}_{n_i,F}(\pi_i).$$
	\medskip	
	This is the final point of \cite{GLa}, Theorem 0.1.
	
	\item[(iii)]  If $\chi \in \CA(1,F)$ then $\CL^{ss}_{n,F}(\pi\otimes \chi\circ \det) = \CL^{ss}_{n,F}(\pi)\otimes \CL_{1,F}(\chi)$.
	\medskip	
	Since this is true for unramified representations, it follows in general by the Chebotarev density argument already used.
	
	\item[(iv)]  Compatibility with base change:  if $\pi \in \CA(n,F)$ is a supercuspidal representation and $F'/F$ is a cyclic extension of prime degree, then (for any $K, K'', \Pi'$ as in Definition \ref{BCcontingent}) 
$$\CL^{ss}_{F'}(BC^{\Pi',K''}_{F'/F}(\pi)) = \CL^{ss}_F(\pi)~|_{W_{F'}}.$$
	\medskip
		
	This is Theorem \ref{chtouca}.  
	
	\item[(v)]  If $\CL^{ss}_{n,F}(\pi)$ is unramified then $\pi$ has an Iwahori fixed vector.
		\medskip
		
	In particular, the proof of Theorem \ref{Iwahori} for $GL(n)$ is a step in the proof of the local Langlands correspondence.   Condition (v) follows from Theorem \ref{gln1}, but the proof is not immediate and requires a separate step.	
	\end{enumerate}
	
	\end{proof}
	
	\begin{step}\label{step2}  Proof of condition (v) of Step \ref{step1}	
	\end{step}
	
	\begin{proof}  Suppose $n = \sum_{i = 1}^r n_i$, $P \subset GL(n)$ a parabolic subgroup with Levi factor $\prod_i GL(n_i)$, and $\pi$ is a constituent of 
	$Ind_{P(F)}^{GL(n,F)} \otimes_{i = 1}^r \sigma_i$, where $\sigma_i$ is a supercuspidal representation of $GL(n_i,F)$ such that  $\CL^{ss}_{n,F}(\pi)$ is unramified.  It follows from condition (ii) of Step \ref{step1} that $\CL^{ss}_{n_i,F}(\sigma_i)$ is unramified.  On the other hand, if each $\sigma_i$ has an Iwahori-fixed vector then every irreducible constituent of $Ind_{P(F)}^{GL(n,F)} \otimes_{i = 1}^r \sigma_i$ has an Iwahori fixed vector.  It follows that it suffices to treat the case where $\pi$ is supercuspidal and $\CL^{ss}_{n,F}(\pi)$ is unramified, which we assume henceforward; the conclusion will be  that $n = 1$. 
	
	By Theorem \ref{gln1} we know that there is a sequence of cyclic extensions $F_0 \subset F_1 \subset \dots \subset F_r$ such that the base change $\pi_r$ of $\pi$ to $F_r$ has an Iwahori fixed vector.  By induction on $r$ and on $n$ we are thus reduced to verifying the following statement:  Suppose $\pi \in \CA(n,F)$ is supercuspidal and $F'/F$ is a cyclic extension of prime degree such that $\Pi = BC_{F'/F}(\pi)$ has an Iwahori fixed vector.  Suppose moreover that $\CL^{ss}_{n,F}(\pi)$ is unramified.  Then $\pi$ is an unramified principal series representation (and in particular $n = 1$).  
	
	We follow the argument in Theorem 12.3 of \cite{Sch13} (which goes back to Henniart).    By Step \ref{Arthur-Clozel} below, it follows that 
	$$\Pi = \boxplus_{j = 0}^{n-1}  {}^{\tau^j}(\chi)$$
	where $\chi$ is a character of $GL(1,F')$ such that ${}^{\tau}(\chi) \neq \chi$.  However, by compatibility with local class field theory and parabolic induction (conditions (i) and (ii) of Step \ref{step1}) we know  that $\chi$ is unramified.  Since $Gal(F'/F)$ acts trivially on unramified characters, $\pi$ could not have been supercuspidal, and indeed it follows that $\pi$ is necessarily a principal series representation.  Since $\CL^{ss}_{n,F}(\pi)$ is unramified,  it follows again from conditions (i) and (ii) of Step \ref{step1} that $\pi$ is an unramified principal series representation.	
	\end{proof}

\begin{step}\label{Arthur-Clozel}  Let $\pi \in \CA(n,F)$ be supercuspidal and suppose $F'/F$ is a cyclic extension of prime degree such that 
$\Pi = BC_{F'/F}(\pi)$ is not supercuspidal.  Then there is a divisor $m$ of $n$, with $n = md$, and a supercuspidal representation $\Pi_1$ of $GL(m,F')$, such that
$$\Pi = \boxplus_{j = 0}^{d-1}  {}^{\tau^j}(\Pi_1)$$
where $\tau$ is a generator of $Gal(F'/F)$, and such that ${}^{\tau}(\Pi_1) \neq \Pi_1$.

Conversely, if $\Pi_i$ is a supercuspidal representation of $GL(m,F')$ such that ${}^{\tau}(\Pi_1) \neq \Pi_1$, then $\Pi$, defined as above, descends to a discrete series representation $\pi$ of $GL(n,F)$.  
\end{step}

\begin{proof}  The first statement is Lemma 6.10 of Chapter I of Arthur-Clozel, \cite{AC}.   The proof of this result, which appears in the local part of \cite{AC} depends on the existence of cyclic base change, which we have admitted for function fields (Hypothesis \ref{ACBC}).  However, a close look at the proof of Theorem 6.2 of Chapter I of \cite{AC}, on which Lemma 6.10 depends, indicates that it suffices to use the Deligne-Kazhdan simple trace formula, and this is already available for function fields.  

The second statement follows from Theorem 6.2 (b) of Chapter I of \cite{AC}.  There it is proved that there is at least one $\pi \in \CA(n,F)$ whose base change to $GL(n,F)$ is $\Pi$.  Moreover, the hypothesis ${}^{\tau}(\Pi_1) \neq \Pi_1$  implies that $\Pi_1$ is $\sigma$-discrete, and an examination of the proof of (b) on p. 53 of \cite{AC} shows that $\pi$ must belong to the discrete series.   
\end{proof}
 
\begin{remark}  The proof of Lemma 6.10 also depends on the Bernstein-Zelevinsky classification of representations of $GL(n,F)$, and thus on the complete local theory for $GL(n)$.  The normalization of the twisted trace formula in Chapter I, \S 2 of \cite{AC} makes use of the Whittaker model.  The arguments here thus do not extend to groups other than $GL(n)$; however, at no point have we used the Lefschetz formula to study the cohomology of moduli spaces.
\end{remark}

\begin{step}\label{Scholze}[Local Langlands bijection]   The map $\pi \mapsto \CL^{ss}_{n,F}(\pi)$ restricts to a bijection between supercuspidal representations of $GL(n,F)$ and irreducible $n$-dimensional representations of the Weil group $W_F$.
\end{step}

\begin{proof}  At this point we are ready to argue as in the proof of Theorem 12.3 of \cite{Sch13}.  Scholze's argument is a formal consequence of Steps \ref{step1} and \ref{Arthur-Clozel}.   In particular, Scholze's induction argument (a version of which goes back to \cite{He88}) implies that the representation $\Pi_{m-1}$ that appears in the middle of his proof of Theorem 12.3, and that is guaranteed to belong to the discrete series by Step \ref{Arthur-Clozel}, is necessarily supercuspidal.  Moreover, the surjectivity of the parametrization is proved by induction on the degree of the representation, as follows:   If $\rho$ is an irreducible $n$-dimensional representation of $W_F$, find a solvable extension $F'$ over which it breaks up as the sum of $d$ $m$-dimensional irreducible representations.  By induction we may assume $F'/F$ is cyclic of prime degree $d$.  Each summand is the image of a supercuspidal representation of $GL(m,F)$, whose Langlands sum is thus invariant under $Gal(F'/F)$; then we apply (b) of Theorem \ref{ACBCloc} to conclude.  

\end{proof}

\begin{step}\label{localfactors}[Compatibility of local factors]  The bijection of Step \ref{Scholze} preserves $L$ and $\varepsilon$ factors of pairs.
\end{step}

\begin{proof}  This is simpler in positive characteristic than for $p$-adic fields, because the $L$-function of a representation of the Galois group of a function field is always known to have a meromorphic continuation with functional equation.  Thus the stability of local $\gamma$-factors can be used to verify the compatibility of local factors, as in the  proof of Theorem \ref{gln1}.
\end{proof}

\section{Hypothetical structures}

Let $G$ be a connected reductive group over the global field $K$, which need not be a function field.  Labesse \cite{Lab} has established a version of the following hypothesis over number fields; in what follows, we assume it is also available for function fields.  

\begin{hyp}\label{simpleBC}  (i)  Let $K'/K$ be a finite cyclic extension.  Let $\CG_0(G,K)$, resp. $\CG_0(G,K')$, denote the set of cuspidal automorphic representations of $G_K$, resp. $G_{K'}$.  There is a (non-empty) subset $\CG_{simple}(G,K) \subset \CG_0(G,K)$ with the following property.  For any $\Pi \in \CG_{simple}(G,K)$, there is a non-empty set $BC(\Pi) \subset \CG_0(G,K')$ with the following property:  for every place $v$ of $K$ at which both $\Pi$ and $K'$ are unramified, and for any place $v'$ of $K'$ dividing $v$, we have
$\Pi'_{v'} \isoarrow BC(\Pi_v)$ for any $\Pi' \in BC(\Pi)$, where $BC(\Pi_v)$ is the base change of $\Pi_v$ under the local Langlands correspondence for unramified representations.


(ii) Suppose $P \subset G$ is a rational parabolic subgroup with Levi factor $M$.  Suppose the global base change maps are defined for appropriate subsets $\CG_{simple}(M,K) \subset \CG_0(M,K)$ and $\CG_{simple}(G,K) \subset \CG_0(G,K)$.  Then  the  maps $\Pi \mapsto BC(\Pi)$ are compatible locally everywhere with parabolic induction from $M$ to $G$.
\end{hyp}

The subscript $_{simple}$ refers to the use of Arthur's simple  trace formula (STF) to establish global base change.  In \cite{Lab}, $K$ is a number field and the subset $\CG_{simple}(G,K)$ consists of representations that are Steinberg at some number of places (at least two).  This provides a sufficient condition to permit the application of the STF -- in other words to eliminate the difficult parabolic terms from both sides of the invariant trace formula.  The Steinberg condition also eliminates the endoscopic terms, thus substantially simplifying the comparison of trace formulas that implies Labesse's result.  We assume 

\begin{hyp}\label{labesse} Labesse's method works over function fields as well and we assume $\CG_{simple}(G,K)$ always includes at least the representations in Labesse's class.
\end{hyp}

It seems likely that this Hypothesis, combined with the general theory of automorphic representations of $GL(n)$, can provide a substitute for Hypothesis \ref{ACBC} in the discussion above for $GL(n)$, but we have not checked the details.  

Since we are not making any assumptions about packets for groups other than $GL(n)$, we introduce the following definition.

\begin{defn}.  Let $F'/F$ be a finite cyclic extension of non-archimedean local fields.  Let $\pi$ and $\pi'$ be irreducible admissible representations of $G(F)$ and $G(F')$, respectively.  We say that $\pi'$ is a base change of $\pi$ if there exists a finite cyclic extension $K'/K$ of global fields, a place $v$ of $K$ such that $K_v \isoarrow F$ and $K'_v \isoarrow F'$, and a cuspidal automorphic representation $\Pi \in \CG_{simple}(G,K)$ such that $\Pi_v \isoarrow \pi$ and, for some $\Pi' \in BC(\Pi)$, we have $\Pi'_v \isoarrow \pi'$.

More generally, if $F = F_0 \subset F_1 \subset \dots \subset F_r = F'$ is a sequence of finite cyclic extensions, with $\pi$ and $\pi'$ as above., we say that $\pi'$ is a base change of $\pi$ if there is a sequence $\pi_i$, with $\pi_0 = \pi$ and $\pi_r = \pi'$, such that $\pi_i$ is a base change of $\pi_{i-1}$ for each $i \geq 1$ in the above sense.
\end{defn}

The set of base changes of $\pi$ defined in this way thus may depend on the choice of global extension $K'/K$, as well as on the intermediate extensions in the setting of the second paragraph.  In particular, the set of base changes is potentially infinite.  

\begin{definition}\label{incorrigible}  Let $\pi \in \CG(G,F)$ be a supercuspidal representation of $G(F)$.  We say $\pi$ is {\bf incorrigible} if, for any sequence $F = F_0 \subset F_1 \subset \dots \subset F_r = F'$ of cyclic extensions, there is a supercuspidal representation $\pi' \in \CG(G,F')$ that is a base change of $\pi$.
\end{definition}

Say an admissible irreducible representation $\sigma$ of $G$ is {\it pure} if its Genestier-Lafforgue parameter $\phi_{\sigma}:  \Gamma_F \ra ^LG(C)$ has the property that, for any Frobenius element $Frob \in \Gamma_F$, the eigenvalues of $\phi_{\sigma}(Frob)$ are all Weil numbers of the same weight.  
Corresponding to Theorems \ref{gln} and \ref{Iwahori} we have the following Conjectures for general groups.  

\begin{conjecture}\label{noincorr}  There are no pure incorrigible supercuspidal representations.
\end{conjecture}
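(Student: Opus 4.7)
The plan is to imitate the proof of Theorem \ref{gln1}: use the solvability of the local Weil group to pass to a cyclic tower over which the Genestier--Lafforgue parameter of $\pi$ simplifies, and then use an $L$-function argument to detect non-supercuspidality of every base change.

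Let $\pi$ be pure supercuspidal with parameter $\phi_\pi:\G_F \ra {}^LG(C)$. Since $W_F/I_F \cong \ZZ$ and the absolute Galois group of a local field is topologically solvable, one can find a tower $F = F_0 \sub F_1 \sub \cdots \sub F_r$ of cyclic prime-degree extensions such that $\phi_\pi|_{W_{F_r}}$ is unramified, i.e.\ trivial on $I_{F_r}$ (up to semisimplification, which is all we require since Genestier--Lafforgue is defined semisimply). Purity then forces $\phi_\pi^{ss}(\text{Frob}_{F_r})$ to be semisimple in ${}^LG(C)$, so for a suitable finite-dimensional representation $\rho:{}^LG\ra GL(N)$ the composite $\rho\circ\phi_\pi^{ss}|_{W_{F_r}}$ contains an unramified character $\alpha$ as a subrepresentation -- e.g.\ take $\alpha^{-1}$ to be any eigencharacter of $\rho(\phi_\pi^{ss}(\text{Frob}_{F_r}))$. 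By the expected compatibility of Genestier--Lafforgue with base change (the analogue of Theorem \ref{chtouca}), every base change $\pi_r$ of $\pi$ to $F_r$ has semisimplified parameter $\phi_\pi^{ss}|_{W_{F_r}}$, and hence inherits this property.

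Next, suppose we have an automorphic $L$-function theory for pairs $(\Pi_r,\beta\circ\rho)$, where $\Pi_r$ globalizes $\pi_r$ (via Theorem \ref{glob}) and $\beta$ is an auxiliary Hecke character, satisfying: (a) an Euler product agreeing with the Artin factors at unramified places; (b) a functional equation; (c) stability of $\gamma$-factors under sufficiently ramified twists at the bad places away from $v_r$; and (d) the local factor at $v_r$ of a supercuspidal representation is entire, or has only poles at prescribed values of $s$. Choose $\beta$ to equal $\alpha^{-1}$ at $v_r$ and sufficiently ramified at the other bad places. Then the presence of $\alpha$ in $\rho\circ\phi_\pi^{ss}|_{W_{F_r}}$ forces a pole at $s=0$ in the Galois Euler factor at $v_r$; by (a)--(c) and the usual functional equation argument, this pole transfers to the automorphic factor at $v_r$, contradicting (d), and therefore the supercuspidality of $\pi_r$.

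The principal obstacle is (d) together with the very availability of the $L$-function theory itself. For $GL(n)$ one has Godement--Jacquet; Section \ref{class} will carry this out for classical groups via the doubling method of Piatetski-Shapiro--Rallis (with the refinement of \cite{LR05}), and the following section will reduce $G_2$ to $GL(n)$ via automorphic descent. For general reductive $G$, however -- especially for genuinely exceptional groups -- no such $L$-function machinery, nor the requisite $\gamma$-stability or precise supercuspidal pole structure, is currently known in the generality demanded. A secondary difficulty is pinning down the base-change compatibility of Genestier--Lafforgue parameters for general $G$, on which the claim that every element of the base change set shares a common parameter depends.
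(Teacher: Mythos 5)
First, note that the statement you set out to prove is stated in the paper as a conjecture, and the paper itself does not prove it in general: it establishes the $GL(n)$ case (Theorem \ref{gln1}), the split classical group case conditionally on Hypotheses \ref{simpleBC} and \ref{labesse} and on the doubling method being available over function fields (Theorem \ref{propclassical}), and it shows that the Hiraga--Ichino--Ikeda formal degree conjecture implies the general case (Proposition \ref{HIIII}). Your sketch is essentially the paper's own strategy for the first two cases: kill the inertia in the semisimplified Genestier--Lafforgue parameter by a solvable tower, globalize, and use unramified matching, the Galois functional equation, and stability of $\gamma$-factors to transfer a pole of the Galois Euler factor at $v_r$ to the automorphic local factor there. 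Your closing assessment of the obstruction for general (especially exceptional) $G$ -- no standard $L$-function machinery with the required local control, and the need to pin down base-change compatibility of the parametrization -- is precisely why the statement remains a conjecture, so your self-diagnosis is accurate.

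Two substantive points of comparison. (1) Your step (d) is too weak as stated once you leave $GL(n)$: for classical groups the doubling local $L$-factor of a supercuspidal representation \emph{can} have poles (point (v) in the proof of Theorem \ref{propclassical} and Remark \ref{polessc}), so exhibiting a single pole coming from one unramified eigencharacter is not a contradiction. The paper uses purity more strongly: all $d(\rho)$ Frobenius eigenvalues have the same weight, so after the unramified twist the Galois factor acquires poles of total multiplicity $d(\rho)$, which exceeds the bound $d<d(\rho)$ on the number of (simple) poles a supercuspidal doubling factor can have; it is this counting, not the mere existence of a pole, that gives the contradiction and simultaneously rules out the cuspidal unipotent examples. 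Relatedly, purity is not used to make Frobenius act semisimply, as your first paragraph suggests, but to enforce this equal-weight condition. (2) The paper also offers a uniform alternative you do not mention: granting Conjecture \ref{HII}, an unramified pure tempered parameter has positive-dimensional centralizer, so $L(s,Ad\circ\phi)$ has a pole at $s=0$ while $L(1-s,Ad\circ\hat{\phi})$ does not, forcing $|\gamma(0,Ad\circ\phi,\psi)|=0$; hence no base change of $\pi$ over your tower can be a discrete series representation (Proposition \ref{HIIII}). That route trades the standard-$L$-function apparatus for the adjoint $\gamma$-factor, at the cost of assuming the formal degree conjecture.
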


The purity hypothesis is meant to exclude cuspidal unipotent representations from consideration.  


The following refinement of Conjecture \ref{noincorr} should be a consequence of a version of the local Langlands conjecture for $G$ that includes  compatibility with parabolic induction.  

\begin{conjecture}\label{Iwahori2}  Let $\pi \in \CG(G,F)$ be a pure supercuspidal representation.  There is a finite sequence of cyclic extensions $F = F_0 \subset F_1 \subset \dots \subset F_r = F'$ such that every base change of $\pi$ to $F'$  contains an Iwahori-fixed vector.
\end{conjecture}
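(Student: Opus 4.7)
The plan is to bootstrap Conjecture \ref{noincorr} upward through a chain of Levi subgroups, ending with a principal series induced from an unramified character of a minimal Levi, and then invoke Borel's theorem that subquotients of unramified principal series have Iwahori-fixed vectors.

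First, I would apply Conjecture \ref{noincorr} to the pure supercuspidal $\pi$: there is a cyclic tower $F = F_0 \subset F_1 \subset \dots \subset F_{r_1}$ such that no base change of $\pi$ to $F_{r_1}$ is supercuspidal. By compatibility of the Genestier--Lafforgue parametrization with parabolic induction (Corollary \ref{compatibilities}(ii)), each such base change $\pi^{(1)}$ is a subquotient of $\mathrm{Ind}_P^G \sigma$ for some proper $F_{r_1}$-parabolic $P \subset G$ with Levi $M$, and some supercuspidal $\sigma \in \CG(M, F_{r_1})$. Purity of $\CL^{ss}_{F_{r_1}}(\pi^{(1)}) = \CL^{ss}_{F_{r_1}}(\pi)|_{W_{F_{r_1}}}$, inherited from $\pi$, transfers to purity of the parameter of $\sigma$ through the embedding ${}^LM \hookrightarrow {}^LG$. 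I would then apply Conjecture \ref{noincorr} to $\sigma$ over $M$, further base changing to force $\sigma$ into a parabolic induction from a smaller Levi. Iterating — and there are only finitely many conjugacy classes of Levi subgroups — after finitely many rounds I arrive at an extension $F^{(k)}/F$ and a minimal $F^{(k)}$-Levi $M_0$ such that the base change $\pi^{(k)}$ is a subquotient of $\mathrm{Ind}_{P_0}^G \chi$ for characters $\chi$ of $M_0(F^{(k)})$.

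Second, I would use solvability of local Galois groups to make $\chi$ unramified. The semisimple parameter $\CL^{ss}_{F^{(k)}}(\pi^{(k)})$ takes values in ${}^LM_0$; since the Weil group of a local field has solvable image on the inertia subgroup modulo wild inertia (which is pro-$p$, hence solvable), there is a finite cyclic tower $F^{(k)} \subset \dots \subset F^{(r)} = F'$ over which the restriction of this parameter to inertia becomes trivial. Compatibility with twists by characters and with class field theory (Corollary \ref{compatibilities}(iii)--(iv)) then forces the inducing characters on $M_0(F')$ corresponding to any element of $BC(\pi^{(r)})$ to be unramified. By Hypothesis \ref{simpleBC}(ii), every element of $BC(\pi)$ over $F'$ is a subquotient of such an unramified principal series, and hence carries an Iwahori-fixed vector.

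The main obstacle is the bookkeeping in the first step: for a general reductive $G$, a non-supercuspidal admissible representation can sit as a subquotient of several parabolic inductions, and one must know that the inducing data on the Levi is itself supercuspidal (or can be reduced to such) in a way compatible with all members of the base change set $BC(\pi)$. This is delicate because $BC(\pi)$ is potentially infinite and depends on global choices. It is precisely the purity hypothesis that guarantees the argument can be iterated: without it one cannot exclude a cuspidal unipotent component surviving on some Levi, which would terminate the induction prematurely with a non-principal-series subquotient. A secondary difficulty is that Genestier--Lafforgue provides only the semisimple part of the parameter, so controlling a possible nilpotent $N$ (were it definable) is done implicitly via purity — an equal-weight Frobenius spectrum on the full parameter forces trivial monodromy, which is consistent with the expected existence of an Iwahori-fixed vector in every member of $BC(\pi)$ over $F'$.
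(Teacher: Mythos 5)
The statement you are trying to prove is stated in the paper as Conjecture \ref{Iwahori2}, not as a theorem: the author offers no proof, remarking only that it ``should be a consequence of a version of the local Langlands conjecture for $G$ that includes compatibility with parabolic induction.'' So your task is really to assess whether your sketch closes that gap, and it does not. The decisive problem is your first step's induction on Levi subgroups. Knowing that a base change $\pi^{(1)}$ is not supercuspidal gives you a supercuspidal support $(M,\sigma)$ by Jacquet theory, but to iterate you need (a) Conjecture \ref{noincorr} and the base change Hypotheses \ref{simpleBC}--\ref{labesse} for the Levi $M$ itself, and (b) the claim that, over the next field in the tower, the supercuspidal supports of the members of $BC(\pi)$ are base changes of $\sigma$. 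Nothing in the paper provides (b): the sets $BC(\cdot)$ are defined through global cuspidal representations in $\CG_{simple}$ and are only pinned down by the \emph{semisimplified} Genestier--Lafforgue parameter, which does not determine supercuspidal support (distinct supports, indeed distinct Levis, can share a semisimple parameter). Hypothesis \ref{simpleBC}(ii) is a statement about global parabolic induction and does not control the local cuspidal support of every member of the potentially infinite set $BC(\pi)$.

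The second, and more fundamental, gap is at the end: you pass from ``the semisimple parameter of every member of $BC(\pi)$ over $F'$ is unramified and pure'' to ``every such member is a subquotient of an unramified principal series, hence has an Iwahori-fixed vector.'' Corollary \ref{compatibilities}(ii) only goes the other way: it computes the parameter of a constituent of a parabolic induction, but it does not let you conclude from an unramified parameter that the representation lies in the corresponding induction. For general $G$ (and for its proper Levis, where your induction must also run) there exist supercuspidal representations --- the cuspidal unipotent-type ones --- whose Frobenius-semisimple parameter is unramified; purity of $\pi$'s parameter does not by itself exclude such a representation from appearing in the support at some intermediate stage, which is exactly why the paper separates Conjecture \ref{Iwahori2} from Conjecture \ref{noincorr} and attributes it to a full local correspondence with parabolic-induction compatibility. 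In short, your outline correctly identifies the intended mechanism (kill ramification by solvable base change, then descend the support to the minimal Levi), but the two steps above are not consequences of the paper's hypotheses; they are essentially restatements of the missing local Langlands input.
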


\section{Classical groups}\label{class}

With local base change defined loosely as in the previous section, we show that the doubling method of Garrett and Piatetski-Shapiro-Rallis, as refined by Lapid and Rallis in \cite{LR}, allows us to prove Conjecture \ref{noincorr} over local fields of positive characteristic.  We assume $G$ is a split classical group over the global function field $K$, of the form $Sp(2n)$ or $SO(V)$ for a vector space $V$ over $K$ with a non-degenerate symmetric bilinear form $b_V$.    We let $F = K_v$ for some place $v$. 

\begin{thm}\label{propclassical}  Assume Hypotheses \ref{simpleBC} and \ref{labesse} hold for $G$ over $K$.  Then $G$ has no incorrigible pure supercuspidal representations.
\end{thm}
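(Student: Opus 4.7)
The strategy is to replay the proof of Theorem \ref{gln1}, substituting the doubling $L$-function of Piatetski-Shapiro-Rallis, refined by Lapid-Rallis \cite{LR}, for Godement-Jacquet. Let $\pi_0$ be a pure supercuspidal representation of $G(F_0)$ with $F_0=K_v$, and let $\phi_{\pi_0}:\Gamma_{F_0}\to{}^LG(C)$ be its semisimple Genestier-Lafforgue parameter. Since the image of inertia in ${}^LG(C)$ is finite and local Galois groups are solvable, I construct a tower $F_0\subset F_1\subset\cdots\subset F_r$ of cyclic prime-degree extensions such that $\phi_{\pi_0}|_{\Gamma_{F_r}}$ is unramified. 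Globalize $\pi_0$ via Theorem \ref{glob}, arranging, by imposing a Steinberg condition at two auxiliary places, that the resulting cuspidal automorphic representation $\Pi_0$ lies in $\CG_{simple}(G,K_0)$, and iterate Hypothesis \ref{simpleBC} along a global tower $K_0\subset K_1\subset\cdots\subset K_r$ in which $v_0$ remains inert and $K_{i,v_i}\cong F_i$. By the local-global compatibility recorded in Corollary \ref{compatibilities}(v), applied in the generality of $G$, the local parameter of $\pi_r:=\Pi_{r,v_r}$ is the restriction of $\phi_{\pi_0}$ to $\Gamma_{F_r}$, hence unramified with Frobenius eigenvalues that are Weil numbers of a common weight.

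Suppose for contradiction that $\pi_0$ is incorrigible, so that the tower can be chosen with $\pi_r$ supercuspidal. Form the global doubling $L$-function $L(s,\Pi_r,\chi)$ for a Hecke character $\chi$ of $K_r^\times\backslash\ad_{K_r}^\times$. Outside a finite set $S$ of places containing $v_r$, the local Euler factors coincide with those of the Artin $L$-function of $\mathrm{std}\circ\CL^{ss}_{K_r}(\Pi_r)\otimes\hat{\chi}$, where $\mathrm{std}:{}^LG\hookrightarrow GL(N)$ is the standard embedding. Choose $\chi$ with $\chi_{v_r}$ unramified so that $\hat{\chi}_{v_r}$ inverts one of the Frobenius eigenvalues of $\phi_{\pi_0}|_{\Gamma_{F_r}}$; this forces the local Galois Euler factor at $v_r$ to have a pole at $s=0$ in the doubling normalization. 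By the stability of the doubling $\gamma$-factors \cite{LR}, sufficiently ramifying $\chi$ at the remaining places of $S$ equates the local $L$-factors on the two sides there, and matching the functional equations on the Galois and automorphic sides then forces the local doubling $L$-factor $L_{v_r}(s,\pi_r,\chi_{v_r})$ to share the same pole at $s=0$.

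The contradiction is to come from the local theory: for $\pi_r$ supercuspidal and $\chi_{v_r}$ unramified, the poles of $L_{v_r}(s,\pi_r,\chi_{v_r})$ should, by the refined doubling method, be confined to reducibility points of the Siegel-induced representation on the doubling group, and purity of $\phi_{\pi_0}$ places the forced pole at $s=0$ outside this admissible region. The main obstacle, and the heart of the proof, is to make this exclusion rigorous: one needs a local statement bounding the poles of the doubling $L$-factor for supercuspidal representations tightly enough to contradict the purity-driven pole produced above. A secondary technical point is verifying that the Genestier-Lafforgue parametrization for classical groups is compatible with the standard embedding $\mathrm{std}:{}^LG\hookrightarrow GL(N)$ in the sense needed to equate local Euler factors outside $S$; this should reduce by Chebotarev to the unramified compatibility already recorded in Corollary \ref{compatibilities}(i), but deserves explicit verification.
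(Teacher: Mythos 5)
Your framework up through the globalization and the solvable tower matches the paper's. But the final step — the one you correctly flag as "the heart of the proof" and leave open — is precisely where your route diverges from the paper's, and the idea you gesture toward does not suffice.

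The crucial difficulty, which distinguishes classical groups from $GL(n)$, is that the local doubling $L$-factor of a supercuspidal representation is \emph{not} identically $1$: it can genuinely have poles, corresponding to theta-dichotomy/first-occurrence phenomena (cf.\ the discussion referencing \cite{HKS} and \cite{Y} in the paper's Remark~\ref{polessc}). So producing a \emph{single} pole of $L_{v_r}(s,\pi_r,\chi_{v_r})$ via one well-chosen unramified $\chi_{v_r}$, as you propose, yields no contradiction whatsoever. There is no "admissible region" in $s$ that purity keeps a lone pole out of; indeed the paper's own property~(iv) only guarantees holomorphy for $\mathrm{Re}(s)\ge\tfrac12$, which does not forbid a pole at $s=0$. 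The paper's actual contradiction is a \emph{counting} argument, not a location argument: purity forces $\rho\circ\phi_{\pi'}$ restricted to the decomposition group to be a sum of $d(\rho)$ unramified characters all of the same Frobenius weight, so on the Galois side one gets unramified twists $\omega_i$ with poles whose total multiplicity is exactly $d(\rho)=\dim V_\rho$. Against this one plays the bound extracted from Yamana \cite{Y}: for a supercuspidal $\pi'$, the doubling $L$-factor has at most $d<d(\rho)$ poles over all twists, each simple. The arithmetic inequality $d(\rho)>d$ is the contradiction. You never invoke this bound, and without it the argument cannot close. To repair your proposal you would need to replace "the pole at $s=0$ lies outside the admissible region" by: "purity and unramifiedness force $d(\rho)$ poles (with multiplicity) over unramified twists, exceeding the supercuspidal bound $d<d(\rho)$ from the doubling method." Your secondary technical point (compatibility of Genestier-Lafforgue with the standard embedding via Chebotarev) is fine and matches what the paper does implicitly.
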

\begin{proof}   We use the local theory of the doubling integral, as worked out in complete detail by Lapid and Rallis in \cite{LR} (see Remark \ref{doubb}).  If $\pi$ is an irreducible admissible representation of $G(E)$ for some local field $E$, $\psi:  E \ra \C^\times$ a continuous character, and $\omega:  E^\times \ra \C^\times$ a continuous (quasi)-character, we let $L(s,\pi,\omega)$ and $\varepsilon(s,\pi,\omega,\psi)$ denote the local $L$ and $\varepsilon$ factors associated to the data by the doubling method of \cite{psr}.  Similarly, if $L$ is a global field, $\Pi$ is a cuspidal automorphic representation of $G(L)$, and $\chi$ is a Hecke character of $L^\times\backslash \ad_L^\times$, then we let $L(s,\Pi,\chi) = \prod_w L(s,\Pi_w,\chi_w)$, where $w$ runs over places of $L$, and define the global $\varepsilon$ factor similarly.   We fix a global additive character 
$$\Psi = \otimes_w \psi_w:  K_{ad}/K \ra C^\times$$
For any finite separable extension $K'/K$ we let $\Psi_{K'} = \Psi \circ Tr_{K'/K}$ (but in general we drop the subscript $_{K'}$.

Now Theorem 4 of \cite{LR}, together with the results listed below, implies that the local and global $L$-functions provided by the doubling method satisfy the following  conditions.  

\begin{itemize}
\item[(i)]  Let $\rho:  {}^LG \ra GL(V_\rho)$ denote the standard representation of the Langlands $L$-group of $G$, and let $d(\rho) = \dim V_\rho$.  Suppose $\sigma$ is an unramified representation of $G(K_w)$ for some $w$ and $\alpha$ is an unramified character of $K_w^\times$.  Then $L(s,\sigma,\alpha) = L(s,\sigma,\alpha,\rho)$
is the unramified Langlands Euler factor attached to $\pi$, $\chi$, and $\rho$. 
\item[(ii)]  The global $L$-function $L(s,\pi,\chi) = \prod_w L(s,\pi_w,\chi_w)$ has a meromorphic continuation with at most finitely many poles, and satisfies a global functional equation
$$  L(1-s, \pi^\vee,\chi^{-1}) = \varepsilon(s,\pi,\chi)L(s,\pi,\chi)$$
\item[(iii)]  Fix $w$ and define the local gamma-factor
$$\gamma(s,\pi_w,\chi_w,\psi_w) = \varepsilon(s,\pi_w,\chi_w,\psi_w)\frac{L(1-s,\pi^{\vee}_w,\chi_w^{-1})}{L(s,\pi_w,\chi_w)}.
$$
Given two irreducible admissible representations $\pi_{1,w}$ and $\pi_{2,w}$, there is an integer $N$ such that, for all $\chi_w$ of conductor at least $N$, we have
$$\gamma(s,\pi_{1,w},\chi_w,\psi_w) = \gamma(s,\pi_{2,w},\chi_w,\psi_w).$$
\item[(iv)]  When $\pi$ is supercuspidal and $\omega$ is a unitary character, then the local factor $L(s,\pi, \omega)$ is holomorphic for $Re(s) \geq \frac{1}{2}$.  In particular, there is no cancellation between the poles of the numerator and denominator of the local gamma-factor $\gamma(s,\pi,\omega,\psi)$. 
\item[(v)]  There is an integer $d < d(\rho)$ such that, if $\pi$ is supercuspidal then the there are at most $d$ characters $\omega$ for which the local factor $L(s,\pi,\omega)$ has a pole, and each such pole is simple.  In particular, the local factor $L(s,\pi, \omega)$ has at most $d$ poles (counted with multiplicity).  
\end{itemize}

Point (iii) is the stability property proved by Rallis and Soudry (\cite{RS}; see also \cite{Br} for the analogous case of unitary groups).  Point (iv) is a consequence of Proposition 5 of \cite{LR}.  Point (v) is addressed at the end of the proof of Theorem 5.2 of \cite{Y}.  It is shown there that all the poles of the local Euler factor of a supercuspidal representation are poles of the local Euler factor there denoted $b(s,\chi)$.  An examination of the list on p. 667 of \cite{Y} confirms that $b(s,\chi)$ has fewer than $d(\rho)$ (fewer than $\frac{d(\rho) + 1}{2}$, in fact) and that they are all simple.  (See Remark \ref{polessc} below.)

Now suppose $\pi$ is pure supercuspidal.  By a simple reduction, using property (iv) of Corollary \ref{compatibilities}, we may assume $\pi$ has central character of finite order. 
Now suppose $\pi$ is also incorrigible.   Following \ref{glob}, we can globalize $\pi$ to a cuspidal automorphic representation $\Pi$ of $G_K$ with central character of finite order.   Let  $\phi_\Pi:  \Gamma_K \ra ^LG(C)$ denote its semisimple Langlands-Lafforgue parameter, $\phi_\pi:  \Gamma_F \ra ^LG(C)$ the local parameter, $\rho\circ\phi_{\pi}^{ss}$ the Frobenius-semisimplification of its composition with $\rho$.  Since $\pi$ is pure,  Grothendieck's monodromy theorem for representations of Weil groups of local fields implies that the image of inertia under $\rho\circ\phi_\pi^{ss}$ is of finite order, and necessarily solvable.  We can thus find a sequence $K = K_0 \subset K_1 \subset \dots K_i \subset K_{i+1} \subset K_r = K'$ of cyclic Galois extensions such that for every prime $v'$ of $K'$ dividing $v$, the image of the inertia group $I_{v'}$ under the restriction of $\rho\circ\phi_\pi^{ss}$ to $\Gamma_{K'}$ is trivial.  Since $\pi$ is incorrigible, and by Hypothesis  \ref{simpleBC} (i), it follows from our hypotheses that there is an automorphic representation $\pi'$ of $G_{K'}$ which is supercuspidal at some prime $v'$ of $K'$ dividing $v$, and whose Langlands-Lafforgue parameter satisfies
$$\phi_{\pi'} = \phi_\pi ~|~_{\Gamma_{K'}}.$$
Moreover, the semisimplification of the restriction of $\rho\circ\phi_{\pi'}$ to the decomposition group $\Gamma_{v'}$ is the sum of $d(\rho)$ unramified characters of the same Frobenius weight.  

Now $\rho\circ \phi_\Pi~|_{\Gamma_{K'}}$ is a continuous representation of the Galois group of the global function field $K'$.  Thus for any Hecke character $\chi$, $L(s,\rho\circ\phi_{\Pi},\chi)$ satisfies a functional equation of Galois type.  By the stability property (iii), the local $\gamma$ factors $\gamma(s,\pi_,\chi_w,\psi_w)$ at all places $w$ of $K'$ coincide with their Galois analogues.
In particular, there is a positive integer $d' \leq d$ such that there are $d'$ unramified characters $\omega_i$ of $(K'_{v'})^\times$, all of the same weight, such that, for each $i$,  the local Euler factor 
$$L(s,\pi',\omega_i) = L(s,\phi_{\pi'}~|~_{\Gamma_{K'_{v'}}}, \omega_i)$$ 
has a pole of multiplicity $d'_i$ at $s = 1$, and such that $\sum_i d'_i = d(\rho)$.  But this contradicts point (v).  Thus $\pi'$ cannot be supercuspidal.
\end{proof}

\begin{remark}\label{doubb}  There are no references for the doubling method over function fields.  The proof of Theorem \ref{propclassical} assumes that it works in exactly the same way for function fields as in \cite{psr,LR}.  The Langlands-Shahidi method has been developed by Lomel\'i over function fields.  Section 7 of \cite{Lo18} contains details about the Langlands-Shahidi local factors for classical groups; note that special care has to be taken in characteristic $2$.  Since we have no information a priori about generic representations (the results of \cite{GV} on the tempered packet conjecture rely on Arthur's results in \cite{A}, which we have deliberately chosen not to use), the Langlands-Shahidi method is not available to us.
\end{remark}

\begin{remark}\label{polessc}  The possible poles of local Euler factors of supercuspidal representations of unitary groups are determined in Theorem 6.2 of \cite{HKS}.  They correspond as expected to the classification of cuspidal unipotent representations.   In particular, it follows from the characterization in \cite{HKS} that the local Euler factor of a pure supercuspidal representation has at most a single simple pole.  (It follows from the actual classification that a supercuspidal representation whose standard local $L$-factor has even one pole cannot be pure, but it would be circular to admit the actual classification at this stage of the argument.)    The  case of orthogonal and symplectic groups is not made explicit in \cite{Y} but the argument of \cite{HKS}, applied to his computations, suffices to verify the claim.  \end{remark}

Of course, if we are assuming Hypothesis \ref{simpleBC} then we could also assume the full stable twisted trace formula, as in Arthur's book \cite{A}, and then Proposition \ref{propclassical} follows from the case of $GL(n)$ by Arthur's trace formula arguments.   However, base change, in the form of Hypothesis \ref{labesse}, is considerably simpler to manage.

\subsection*{The local correspondence}  Let $G$ be a split classical group and let $\rho:  W_F \ra \hat{G}$ be a local Langlands parameter for $G$.  We say $\rho$ is {\it irreducible} if the image of $\rho$ is contained in no proper parabolic subgroup of $\hat{G}$, and we say $\rho$ is {\it stable} if its centralizer in $\hat{G}$ is trivial.

Starting with Theorem \ref{propclassical} we can try to follow the steps of Section \ref{localGLn} to give a complete proof of the local correspondence between $L$-packets of supercuspidal representations of a split classical group $G$ and pure irreducible Langlands parameters with values in $\hat{G}$ (we side-step the problem of unipotent supercuspidal parameters, which require other methods).  The argument is necessarily more complicated, because of the presence of $L$-packets.  If we admit endoscopic transfer (thus we require the full strength of the stable trace formula, twisted as well as untwisted), an induction argument reduces the proof to the following situation:  $F'/F$ cyclic of prime degree, $\rho:  W_F \ra \hat{G}$ a stable irreducible parameter, such that $\rho' = \rho~|_{W_{F'}}$ is reducible and each of the irreducible components of $\rho'$ is stable.  Then $\rho'$ is the parameter of a (unique) tempered representation of $G(F')$ that is invariant under $Gal(F'/F)$, and we need to relate its twisted character to the character of a supercuspidal representation of $G(F)$.  This is the generalization of (b) of Theorem \ref{ACBCloc}.

\section{Automorphic descent for classical groups and $G_2$}  

Continuing the exercise of seeing how much information can be extracted from a given collection of hypotheses, we now show how to derive the surjectivity of the local parametrization of Corollary \ref{compatibilities} from a version of the automorphic descent method of Ginzburg-Rallis-Soudry.    We state a special version of the main theorem of \cite{GRS}.   Let $K$ be a number field and let $m \geq 1$ be a positive integer; let $GL(m)$ denote the group $GL(m)$ over $K$.  Let
$V = V_m/K$ be a finite dimensional vector space.  If $m = 2n+1$ we assume $\dim V_m = 2n$ and $V_v$ is endowed with  a non-degenerate symplectic form, and we let $\alpha_V$ denote the representation $Sym^2$ of $\hat{G}_m = GL(m)$.  If $m = 2n$ we distinguish two subcases:  either $\dim V_m = 2n+1$ and $\alpha_V$ is the representation $\wedge^2$ of $\hat{G}_m$, or $\dim V_m = 2n$ and $\alpha_V$ is the representation $Sym^2$.  Let $G = G_V$ denote the identify component of the symmetry group of this form.  For simplicity we assume $G$ to be a split group; however, the results of \cite{GRS} are valid for quasi-split groups, including unitary groups.  

Thus we restrict our attention to cases (1), (2), and (10) in the lists (3.42) and (3.43) of \cite{GRS}.   In each of these cases there is a canonical $L$-homomorphism
\begin{equation}\label{Lh}
j_V:  {}^LG \ra {}^LGL(m).
\end{equation}
It thus makes sense to say that an automorphic representation $\tau$ of $GL(m)$ is a weak functorial lift of an automorphic representation $\pi$ of $G$:  this means that, at almost every finite place $v$ of $K$, the local components $\tau_v$ and $\pi_v$ are unramified, and the Satake parameter of $\tau_v$ is obtained from that of $\pi_v$ by composition with the morphism $j_V$.

\begin{thm}[Theorem 3.1, \cite{GRS}]\label{descent}   With notation as above, let $\tau$ be a unitary cuspidal automorphic representation of $GL(m)$ over $K$.  Suppose the Langlands $L$-function $L(s,\tau,\alpha)$ has a pole at $s = 1$.  Assume the central character $\omega_\tau$ of $\tau$ is trivial, except when $G = SO(2n)$ (in which case it is automatic that $\omega_\tau^2$ is trivial).  Then there is a multiplicity-free cuspidal automorphic representation $\pi$ of $G_V$ such that $\tau$ is a weak functorial lift of $\pi$ with respect to \eqref{Lh}.

Moreover, each irreducible summand of $\pi$ is globally generic with respect to some non-degenerate character of the unipotent radical of a Borel subgroup of $G_V$.  
\end{thm}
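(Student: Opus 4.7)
The plan is to follow the automorphic descent construction of Ginzburg--Rallis--Soudry, which produces $\pi$ explicitly from $\tau$ by means of a unipotent (Fourier--Jacobi or Bessel) integral applied to a residual Eisenstein series. First I would build the relevant residual representation: form the induced representation $\mathrm{Ind}_{P(\ad_K)}^{H(\ad_K)} \tau|\det|^s$ on a suitable auxiliary classical group $H$ (roughly $Sp(2m)$, $SO(2m)$, or $SO(2m+1)$, chosen according to which of the three cases we are in) with $P$ the Siegel or near-Siegel parabolic whose Levi contains $GL(m)$, and form the Eisenstein series $E(g,s,\varphi_\tau)$. The hypothesis that $L(s,\tau,\alpha_V)$ has a pole at $s=1$, combined with the calculation of the constant term (which expresses normalized intertwining operators in terms of $L(s,\tau,\alpha_V)$ and $L(2s,\tau,\alpha_V)$ in the critical case), is exactly what produces a nontrivial residue $\mathcal{E}_{-1}(\tau)$ at a specific point $s_0$, giving a square-integrable residual automorphic representation of $H(\ad_K)$.

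Next I would apply the descent map: a Fourier coefficient along a specific unipotent subgroup $U \subset H$ stabilized (up to a character) by the smaller group $G_V$, with respect to a non-degenerate character $\psi_U$ whose stabilizer inside the Levi of the normalizer of $U$ is the group $G_V$. Integrating the residual automorphic forms against $\psi_U$ over $U(K)\backslash U(\ad_K)$ produces automorphic functions on $G_V(\ad_K)$, and I would let $\pi = \sigma_{\psi_U}(\mathcal{E}_{-1}(\tau))$ denote the (semisimple) representation of $G_V(\ad_K)$ generated by these functions. Cuspidality of $\pi$ is the first substantive check: it follows by showing, via root-exchange arguments, that every non-trivial constant term of a descent form vanishes because it leads to a Fourier coefficient of $E(g,s,\varphi_\tau)$ along a unipotent orbit strictly larger than the one associated to $\tau$, contradicting the fact that the wavefront set of an isobaric $GL(m)$-representation is the expected one. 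Genericity of each summand with respect to the character inherited from $\psi_U$ follows essentially by construction; multiplicity freeness follows from the uniqueness of Whittaker models for the irreducible constituents of the descent.

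The main obstacle, and the heart of the theorem, is proving \emph{non-vanishing} of $\pi$ and verifying the \emph{weak functorial lift} identity. For non-vanishing one uses the ``tower property'' of Fourier--Jacobi coefficients: one shows that if the descent to the group at the bottom of the tower of classical groups vanished, then a larger Fourier coefficient of $\mathcal{E}_{-1}(\tau)$ would not vanish, contradicting a bound on the unipotent orbits supporting the residue (proved by matching the automorphic exponents with the cuspidal support of $\tau$). For functoriality I would perform an unramified computation at a finite place $v$ where everything is spherical: identify the descent of an unramified vector in $\mathcal{E}_{-1}(\tau_v)$ with a specific spherical vector in a principal series of $G_V(K_v)$, and compare Satake parameters via the Casselman--Shalika formula (or the explicit Gelfand--Graev model) to conclude that the Satake parameters are exactly the image under $j_V$. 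Combining non-vanishing, cuspidality, genericity, and the Satake parameter identification yields the theorem.
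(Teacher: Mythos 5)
The paper itself gives no proof of this statement: it is imported verbatim as Theorem 3.1 of \cite{GRS}, so the only meaningful comparison is with the source, and your sketch is a faithful outline of exactly how Ginzburg--Rallis--Soudry prove it --- residual Siegel-type Eisenstein series on an auxiliary classical group detected by the pole of $L(s,\tau,\alpha_V)$, descent via Gelfand--Graev (Bessel) or Fourier--Jacobi coefficients whose character-stabilizer is $G_V$, cuspidality and nonvanishing through the tower property and the control of unipotent orbits supporting the residue, and an unramified local computation identifying the Satake parameters under $j_V$. Two small corrections to your summary: in the symplectic/metaplectic cases the Fourier--Jacobi coefficient is taken against a theta series coming from the Weil representation, not merely against a character of $U$; and genericity of the summands is not ``essentially by construction'' but is precisely the nonvanishing of a composed Whittaker--Fourier coefficient of the residue, i.e.\ the same delicate computation that yields nonvanishing of the descent. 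With those caveats, your proposal is a correct roadmap of the cited proof (whose full details occupy most of \cite{GRS}), and it matches the argument this paper relies on by reference rather than reproving.
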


Joseph Hundley and Baiying Liu have recently announced an analogous result for the split exceptional group $G_2$.  Let $j_2:  G_2 \ra GL(7)$ denote the irreducible $7$-dimensional representation of $G_2$. 

\begin{thm}[Hundley-Liu, to appear]\label{HLiu}  Let $\tau$ be a unitary cuspidal automorphic representation of $GL(7)_K$.  Suppose the Langlands $L$-function $L(s,\tau,\wedge^3)$ has a pole at $s = 1$.  Then (under mild hypotheses on the unramified components of $\tau$) there is a globally generic cuspidal automorphic representation $\pi$ of $G_2(\ad_K)$ such that $\tau$ is a weak functorial lift of each irreducible constituent of $\pi$ with respect to the $L$-homomorphism $j_2$.  
\end{thm}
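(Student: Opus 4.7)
The plan is to mimic the Ginzburg--Rallis--Soudry automorphic descent construction, exploiting the fact that $G_2$ is the stabilizer of a generic alternating $3$-form on a $7$-dimensional space. This explains both why the pole of $L(s,\tau,\wedge^3)$ at $s=1$ is the right hypothesis (it is the exterior power in which the $G_2$-invariant lives) and why the descended representation should live on $G_2$.

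First I would realize $L(s,\tau,\wedge^3)$ as a factor of a constant-term coefficient of an Eisenstein series on a suitable exceptional group $H$; the natural choice is $E_7$ (or $E_8$) with a maximal parabolic $P = MN$ whose Levi factor contains $GL(7)$ and whose unipotent radical, abelianized, carries the $\wedge^3$ of the standard $GL(7)$-representation. Inducing $\tau \otimes |\det|^s$ produces an Eisenstein series $E(s,\phi)$, and a Langlands--Shahidi analysis of its constant term along $P$ shows that $L(s,\tau,\wedge^3)$ appears as a factor; under the mild unramified hypotheses the pole at $s=1$ then forces a pole of $E(s,\phi)$ at the corresponding point $s_0$. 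Let $\mathcal{E}(\phi)$ denote the residue, a square-integrable automorphic form on $H(\ad_K)$.

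Next I would define the descent map as a Fourier coefficient of $\mathcal{E}(\phi)$: choose a unipotent subgroup $U \subset H$ and a generic character $\psi_U$ on $U(K)\backslash U(\ad_K)$ whose stabilizer in the relevant Levi, modulo the kernel, is $G_2$, and set
$$\pi(g) = \int_{U(K)\backslash U(\ad_K)} \mathcal{E}(\phi)(ug)\, \psi_U(u)^{-1}\, du, \qquad g \in G_2(\ad_K).$$
Three things then have to be verified. (i) \emph{Non-vanishing:} this is where the hypothesis on the unramified components of $\tau$ enters, typically by reducing to an unramified local computation that must not produce a zero. (ii) \emph{Cuspidality of $\pi$ on $G_2$:} this amounts to showing that the constant terms of $\pi$ along the two maximal parabolics of $G_2$ vanish, which translates into the vanishing of Fourier coefficients of $\mathcal{E}(\phi)$ attached to certain nilpotent orbits of $H$ strictly larger than, or incomparable with, the orbit of $\psi_U$. (iii) \emph{Global genericity:} this is handled by computing the Whittaker--Fourier coefficient of $\pi$ against a non-degenerate character of a maximal unipotent of $G_2$, and applying a root-exchange argument to identify it with a non-degenerate Whittaker coefficient of $\mathcal{E}(\phi)$, which is non-zero because $\tau$ is generic on $GL(7)$. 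The weak lift property with respect to $j_2\colon G_2 \hookrightarrow GL(7)$ is then a Satake computation at unramified places, using the Gindikin--Karpelevich formula for the constant term of $\mathcal{E}(\phi)$.

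The main obstacle, as in every instance of the descent method, is the orbit analysis controlling the Fourier coefficients of $\mathcal{E}(\phi)$. One needs sufficiently sharp information on the wavefront set of the residual representation to show that exactly those Fourier coefficients that would destroy cuspidality or non-vanishing of the descent do vanish, while the one defining the descent to $G_2$ is non-zero. This is precisely the point at which the ``mild hypotheses on the unramified components of $\tau$" must be invoked, and the exceptional nature of $G_2 \subset SO(7) \subset GL(7)$ makes this orbit/wavefront bookkeeping the delicate heart of the argument.
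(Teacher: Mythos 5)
This statement is not proved in the paper at all: it is quoted as an announced result of Hundley and Liu (``to appear''), and the only hint the paper gives about their method is the later remark that they ``use properties of unipotent conjugacy classes in $E_7$.'' So there is no internal proof to measure you against. Your outline is at least consistent with that hint: an Eisenstein series on $E_7$ attached to the maximal parabolic with Levi containing $GL(7)$ (whose abelianized nilradical is indeed $\wedge^3$ of the standard representation), a residue at the point corresponding to the pole of $L(s,\tau,\wedge^3)$, and a descent defined by a Fourier coefficient along a unipotent period whose stabilizer is $G_2$, is exactly the shape one expects for a Ginzburg--Rallis--Soudry-style argument in this exceptional setting, and your identification of $G_2$ as the stabilizer of a generic alternating $3$-form on a $7$-dimensional space is the correct structural reason the construction should land on $G_2$.

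That said, what you have written is a program, not a proof, and the gaps are precisely the content of the theorem. (1) The implication ``pole of $L(s,\tau,\wedge^3)$ at $s=1$ $\Rightarrow$ pole of $E(s,\phi)$'' is not automatic: the constant term of the Eisenstein series involves the $\wedge^3$ $L$-function together with further factors and local intertwining operators at ramified places, and one must show these do not cancel the pole; this (together with the local unramified non-vanishing of the descent integral) is where the ``mild hypotheses on the unramified components of $\tau$'' actually do their work, and you have not specified the parabolic, the relevant nilpotent orbit of $E_7$, or the character $\psi_U$ whose stabilizer is $G_2$. (2) The vanishing of the Fourier coefficients attached to larger orbits (needed both for cuspidality of the descent and for the tower-property/non-vanishing argument), the genericity of the descent via root exchange, and the Satake matching at unramified places are all asserted rather than established; for classical groups these occupy the bulk of \cite{GRS}, and in the $E_7/G_2$ case the wavefront-set analysis is new and delicate (the paper notes it constrains the characteristic when transposed to function fields). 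So your proposal correctly locates the difficulty but does not resolve it; as a reconstruction of an unproved, externally cited theorem it cannot be counted as a proof.
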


\begin{hyp}\label{descenthyp}  Let $K$ be a global function field over the finite field $\Fp$.   Let $G$ be one of the split groups $SO(2n)$, $SO(2n+1)$, $Sp(2n)$ or $G_2$, viewed as an algebraic group over $K$.  Assume the validity for $G$ of either Theorem \ref{descent} or Theorem \ref{HLiu} (whichever of the two is relevant), for any finite extension of $K$.
\end{hyp}

The following is well-known and is proved by standard approximation arguments.

\begin{lemma}\label{globgal}  Let $F$ be a local field and let $\rho:  \G_F \ra GL(n,\Qlb)$ be a homomorphism with finite image.  Then there is a global field $K$ with place $v$ such that $K_v \isoarrow F$, an inclusion $\G_F \isoarrow \Gamma_v \subset \G_K$ as decomposition group, and a homomorphism $\sigma:  \G_K \ra GL(n,\Qlb)$ such that $\sigma~|_{\Gamma_v} \simeq \rho$, and such that the restriction of $\sigma$ to $\Gamma_v$ identifies the images of $\sigma$ and $\rho$.
\end{lemma}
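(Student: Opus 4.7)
The plan is a standard globalization. Since $\rho$ has finite image, it factors through a finite Galois quotient $\G_F \twoheadrightarrow Gal(F'/F) \xrightarrow{\bar\rho} GL(n,\Qlb)$, with $\bar\rho$ injective and image equal to $\rho(\G_F)$. It then suffices to construct a global field $K$, a place $v$, and a finite Galois extension $L/K$ satisfying (a) $K_v \isoarrow F$; (b) a chosen isomorphism $Gal(L/K) \isoarrow Gal(F'/F)$; and (c) $v$ inert in $L$, with unique place $w \mid v$ satisfying $L_w \isoarrow F'$ compatibly with (a) and (b). Granted such data, define
$$\sigma: \G_K \twoheadrightarrow Gal(L/K) \xrightarrow{\sim} Gal(F'/F) \xrightarrow{\bar\rho} GL(n,\Qlb).$$
Condition (c) says precisely that the decomposition group $Gal(L_w/K_v)$ equals all of $Gal(L/K)$, so $\sigma|_{\Gamma_v}$ factors through $Gal(L_w/K_v) = Gal(L/K) \cong Gal(F'/F)$ and is identified with $\rho$, and the images of $\sigma$ and $\rho$ coincide.

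To construct $L/K$, first choose $K$ with a place $v$ satisfying $K_v \isoarrow F$ --- in equal characteristic, $K = k(T)$ with $v = (T)$ works when $F = k((T))$; in mixed characteristic one takes a suitable number field, subject to the Grunwald--Wang caveat flagged in footnote 3 of the paper. Let $\alpha \in F'$ be a primitive element with minimal polynomial $p(X) \in F[X]$. Since $K$ is dense in $F$ at $v$, approximate $p$ by a polynomial $q(X) \in K[X]$ of the same degree; by Krasner's lemma, for $q$ sufficiently close to $p$ the $F$-algebra $F[X]/(q)$ is isomorphic to $F'$. Setting $L_0 = K[X]/(q)$, one has $L_0 \otimes_K K_v \isoarrow F'$, so $v$ is inert in $L_0$ with local completion $F'$. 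Let $L$ be the Galois closure of $L_0/K$; the decomposition group at any place $w \mid v$ of $L$ is isomorphic to $Gal(F'/F)$ (since $F'/F$ is itself Galois), and it injects into $Gal(L/K)$.

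The main obstacle is to force this injection to be an equality, i.e.\ that the global Galois group $Gal(L/K)$ not strictly exceed the local one $Gal(F'/F)$ inside the symmetric group on the roots of $q$. One route is a refinement of the approximation: by simultaneously approximating (in the $v$-adic topology) the coefficients of all the polynomial relations witnessing the $Gal(F'/F)$-action on the conjugates of $\alpha$, one arranges $L_0$ itself to be Galois over $K$ of degree $[F':F]$, hence equal to its own Galois closure $L$, yielding (b). A more conceptual route in the function-field setting --- the only case relevant to this paper --- invokes the Harbater--Pop patching theorems, which realize any finite group as $Gal(L/K)$ for a global function-field extension with arbitrarily prescribed local completions at a finite set of places; applied to the group $Gal(F'/F)$ with prescribed completion $F'$ at $v$, this directly produces the required $L/K$. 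Either route closes the construction, and the assembly in the first paragraph then yields the lemma.
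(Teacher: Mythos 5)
Your overall skeleton is the right one (and the one the paper intends — it offers no details beyond ``standard approximation arguments''): factor $\rho$ through $Gal(F'/F)$, produce a global situation in which the decomposition group at $v$ is the \emph{whole} global Galois group, and inflate. The Krasner step producing $L_0=K[X]/(q)$ with $L_0\otimes_K K_v\isoarrow F'$ is also fine. The gap is precisely at the crux you flag, and neither of your two routes closes it. Route one is not an argument: if $\alpha_j=g_j(\alpha)$ with $g_j\in F[X]$ and you replace $g_j$ by a $v$-adic approximation $h_j\in K[X]$, then $q(h_j(\beta))$ is merely close to $0$, not equal to $0$; nothing forces the approximate relations to hold exactly, and nothing in Krasner's lemma prevents the Galois closure of $L_0/K$ from being strictly larger than $[F':F]$ (generically it is). Route two misquotes the scope of patching: the Harbater--Pop theorems realize groups with prescribed local data over function fields of curves over complete or large base fields (e.g.\ $k((t))(x)$, $\Qp(x)$), not over a global function field $\FF_q(T)$; realizing an arbitrary finite group over $\FF_q(T)$ with prescribed completions is not a known consequence of patching, and your number-field case is left unaddressed. (Only solvable groups are needed, since local Galois groups are solvable, but exploiting that would require Grunwald--Wang/Neukirch-type embedding-problem results — far heavier than the lemma warrants.)

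The missing elementary idea, which is what ``standard approximation arguments'' amounts to, is to pass to the decomposition field instead of trying to make $L_0/K$ Galois. Let $L$ be the splitting field of $q$ over $K$. Every root of $q$ is Krasner-close to a conjugate of $\alpha$, hence generates $F'$ over $F$; therefore $L_w\isoarrow F'$ for any place $w\mid v$, and the decomposition group $D\subset Gal(L/K)$ at $w$ is identified with $Gal(F'/F)$. Now replace $K$ by $K_1=L^{D}$ and $v$ by $v_1=w|_{K_1}$: one has $K_{1,v_1}\isoarrow K_v\isoarrow F$, the extension $L/K_1$ is Galois with group $D\cong Gal(F'/F)$, and the decomposition group of $w$ in $Gal(L/K_1)$ is all of $D$. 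Inflating $\bar\rho$ through $\Gamma_{K_1}\twoheadrightarrow Gal(L/K_1)\cong Gal(F'/F)$ yields $\sigma$ with $\sigma|_{\Gamma_{v_1}}\simeq\rho$ and with image equal to that of $\rho$, which is exactly the assertion. (In the equal-characteristic case a legitimate ``conceptual route'' is the Katz--Gabber canonical extension, which produces a $Gal(F'/F)$-cover of $\PP^1_k$ with full decomposition at one point; that, rather than Harbater--Pop, is the correct citation.) With this repair, the assembly in your first paragraph goes through verbatim.
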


When $G = G_2$ we let $m = 7$, $\alpha = \wedge^3$, and let $j_G$ be the map $j_2$ introduced above; when $G$ is classical we let $j_G = j_V$.
\begin{prop}\label{surj}   Assume  Hypothesis \ref{descenthyp} for the group $G$.  Let $k$ be a finite field of order $q = p^f$ for some $f$, let $F = k((T))$, and let 
$$\rho_G:  \G_F \ra \hat{G}(\Qlb)$$
be a Langlands parameter for $G$.  Suppose 
$$\rho = j_G \circ \rho_G:  \G_F \ra GL(m,\Qlb)$$
is an irreducible representation of $\G_F$.
 Then there is a generic supercuspidal representation $\pi$ of $G(F)$ such that 
$$\CL_F(\pi) = \rho_G.$$
\end{prop}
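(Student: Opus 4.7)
\emph{Proof proposal.} The plan is an automorphic descent argument, mirroring the strategy of Proposition \ref{propclassical} in reverse: use the local Langlands correspondence for $GL(m,F)$ established in Section \ref{localGLn} to produce a supercuspidal $\tau_v$ on $GL(m,F)$ with parameter $\rho$, globalize it to a self-dual cuspidal representation $\tau$ of $GL(m,\ad_K)$ for a function field $K$ with $K_v \isoarrow F$, verify that $L(s,\tau,\alpha)$ has a pole at $s=1$, apply Hypothesis \ref{descenthyp} to obtain a globally generic cuspidal $\pi$ of $G(\ad_K)$ lifting to $\tau$, and read off $\pi_v$.

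First, by the local Langlands correspondence for $GL(m,F)$, there is a supercuspidal $\tau_v$ of $GL(m,F)$ with $\CL^{ss}_{F}(\tau_v) = \rho$; supercuspidality follows from the irreducibility of $\rho$, and $\tau_v$ is self-dual of the orthogonal or symplectic type matching $\alpha$, because $\rho_G$ lands in $\hat{G} \subset GL(m)$ and $\hat G$ preserves a non-degenerate bilinear form of the appropriate parity.  I would next globalize $\tau_v$ via Theorem \ref{glob}, arranged so that $\tau$ is \emph{globally} self-dual of the same type as $\tau_v$; this can be done by prescribing the local component of $\tau$ at an auxiliary place $w$ to be a representation (for instance a twist of the Steinberg) that is self-dual of the prescribed type, using a refinement of Gan-Lomel\'i with controlled ramification.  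Once $\tau$ is globally self-dual of the correct type, the factorization
\[
L(s,\tau \times \tau) = L(s,\tau,\mathrm{Sym}^2)\,L(s,\tau,\wedge^2)
\]
together with the simple pole at $s=1$ of the Rankin-Selberg $L$-function $L(s,\tau\times\tau^\vee)$ forces exactly one of the factors on the right-hand side to have a pole at $s=1$, and the matched symmetry type identifies that factor as $L(s,\tau,\alpha)$.

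Applying Hypothesis \ref{descenthyp} (Theorem \ref{descent} for classical $G$, Theorem \ref{HLiu} for $G = G_2$) then yields a globally generic cuspidal automorphic representation $\pi$ of $G(\ad_K)$ whose weak functorial lift under $j_G$ is $\tau$.  At almost all unramified places the Satake parameters of $\pi$ and $\tau$ are related by $j_G$; by compatibility of Genestier-Lafforgue parameters with the unramified local correspondence (Corollary \ref{compatibilities}(i)) and Chebotarev density, the global parameter of $\pi$ lifts under $j_G$ to that of $\tau$.  Restricting to $v$ via Corollary \ref{compatibilities}(v) gives $j_G \circ \CL_F^{ss}(\pi_v) = \rho = j_G \circ \rho_G$; the irreducibility of $\rho$, together with the fact that $j_G$ is injective on conjugacy classes of parameters whose image in $GL(m)$ is irreducible, yields $\CL_F(\pi_v) = \rho_G$.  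Finally, since $j_G$ carries proper parabolics of $\hat G$ into proper parabolics of $GL(m)$, the irreducibility of $\rho$ prevents $\rho_G$ from lying in a proper parabolic of $\hat G$, and compatibility of $\CL_F$ with parabolic induction (Corollary \ref{compatibilities}(ii)) forces $\pi_v$ to be supercuspidal.

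The main obstacle I foresee is in the second step, namely arranging the globalization of $\tau_v$ so that $\tau$ is globally self-dual of the type matching $\alpha$.  Theorem \ref{glob} as stated does not automatically provide this, and one must either establish a refinement of Gan-Lomel\'i with a prescribed self-dual component at an auxiliary place (to pin down the global symmetry type of $\tau$), or directly globalize $\rho_G$ as an $\hat{G}$-valued Galois representation and then transfer to the $GL(m)$ side via the $GL(m)$ local Langlands correspondence proven in Section \ref{localGLn}.
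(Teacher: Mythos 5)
Your main route has a genuine gap, which you partly identify but underestimate. The issue is not merely that Theorem \ref{glob} does not provide a self-dual globalization of the right type; it is that prescribing a self-dual local component at a single auxiliary place (Steinberg or otherwise) does not force the \emph{global} representation $\tau$ to satisfy $\tau \cong \tau^\vee$, and without global self-duality you do not know that $L(s,\tau\times\tau^\vee)=L(s,\tau\times\tau)$ has a pole at $s=1$, which is the input your factorization argument needs. A second, independent gap: the Rankin--Selberg factorization $L(s,\tau\times\tau)=L(s,\tau,\mathrm{Sym}^2)\,L(s,\tau,\wedge^2)$ identifies a pole of $L(s,\tau,\alpha)$ only when $\alpha\in\{\mathrm{Sym}^2,\wedge^2\}$; it says nothing about $\wedge^3$, so your argument does not cover the $G=G_2$ case (Theorem \ref{HLiu}), which the proposition is supposed to include.

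Your alternative route at the end --- globalize $\rho_G$ directly as a $\hat G$-valued Galois representation --- is essentially the paper's proof, and it repairs both gaps at once. The paper applies the approximation Lemma \ref{globgal} to extend $\rho=j_G\circ\rho_G$ to a global $\sigma\colon\G_K\to GL(m,\Qlb)$ whose image coincides with that of $\rho$, so in particular $\sigma(\G_K)\subset j_G(\hat G)$. One then invokes L.~Lafforgue's \emph{global} Langlands correspondence \cite{Laf02} (not the local correspondence of Section \ref{localGLn} followed by Gan--Lomel\'i) to produce a cuspidal $\tau$ on $GL(m)_K$ with $\CL^{ss}_K(\tau)=\sigma$. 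Because $\hat G$ fixes a line in $\alpha$ for every $G$ under consideration (including $G_2$, which preserves the invariant $3$-form inside $\wedge^3\CC^7$), the Artin $L$-function $L(s,\alpha\circ\sigma)=L(s,\tau,\alpha)$ has a pole at $s=1$ automatically, with no need to discuss self-duality or to factor through Rankin--Selberg. From there the descent, the Chebotarev comparison of parameters, and the supercuspidality conclusion proceed as you describe.
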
 
\begin{proof}  Let Let $K$, $v$, and $\sigma:  \G_F \ra GL(m,\Qlb)$ be as in Lemma \ref{globgal}.   By the global Langlands correspondence of \cite{Laf02} for $GL(m)$ there is an automorphic representation $\tau$ of $GL(m)$ such that $\CL^{ss}_K(\tau)= \sigma$.  Moreover, since $\rho$ is irreducible, so is $\sigma$, so $\tau$ is cuspidal.  On the other hand, the image $\sigma(\G_K) \subset j_G(\hat{G})$.  It follows that $\G_F$ fixes a line in the representation $\alpha \circ \sigma$, hence that
$$L(s,\tau,\alpha) = L(s,\alpha\circ\sigma)$$
has a pole at $s = 1$.  We may thus apply Hypothesis \ref{descenthyp} to obtain a globally generic cuspidal automorphic representation $\pi$ of $G(\ad_K)$ such that $\tau$ is a weak functorial lift of  each irreducible constituent of $\pi$ with respect to the $L$-homomorphism $j_G$.  By Chebotarev density, it then follows that
$$\CL^{ss}_F(\pi_v) = \rho_G.$$
Since $\rho_G$ is irreducible, it follows from (ii) of Theorem \ref{compatibilities} that $\pi_v$ is supercuspidal.
\end{proof}

\begin{remark}  When $G$ is a classical group, the hypothesis that $\rho$ be irreducible is superfluous: the main result of \cite{GRS} provides descent for automorphic representations of $GL(m)$ that are not necessarily cuspidal, under  conditions that are implied by the irreducibility of $\rho_G$.  Presumably when $G = G_2$, $\rho$ is irreducible unless $\rho_G$ is an endoscopic parameter, in which case the proposition can be obtained by endoscopic lift -- or, more concretely, by the exceptional theta correspondence for the pair $(PGL(3),G_2)$ \cite{GS}.   
\end{remark}

\subsection*{Surjectivity vs. incorrigibility}
The possibility of using the results of \cite{GRS} to prove Proposition \ref{surj} was noted in \S 7.4 of \cite{GLo};  they write that ``The theory of local descent should continue to work over a local function field $F$. However, it is presently not written up in this generality in the literature."  The results of Hundley and Liu are also only proved over number fields.  Liu has indicated that their proof should work for $G_2$ over fields of sufficiently large characteristic, since they use properties of unipotent conjugacy classes in $E_7$.  

Even assuming Hypothesis \ref{descenthyp}, however, it is not clear that we know enough to prove Conjecture \ref{noincorr} for $G_2$.   For this we would need a full local theory of the integral representations  \cite{GH,GS1,GS2} of the standard $L$-function of $G_2$, satisfying conditions (i)-(v) of the proof of Theorem \ref{propclassical}.

\section{Formal degree and incorrigible representations}

The article \cite{HII08} of Hiraga, Ichino, and Ikeda proposed an explicit conjectural formula for the formal degrees of discrete series representations of reductive groups over local fields and a related formula for the Plancherel measure.   The formula has been proved in a great many cases but remains open, with few results known for exceptional groups -- which is hardly surprising, since the conjecture is formulated in terms of the local Langlands parametrization.  In this final section we show that Conjecture \ref{noincorr} for groups over fields of positive characteristic is a simple consequence of the Hiraga-Ichino-Ikeda conjecture.   For the local Langlands parametrization we take the one defined in \cite{GLa}.  

The careful reader may object that \cite{HII08} only states a conjecture for groups over local fields of characteristic zero, but Ichino has assured us that the conjecture can be stated just as well for groups over fields of positive characteristic.  The assumption of characteristic zero was only made in \cite{HII08} because the proofs given there in specific examples were based on methods that at the time were only available for $p$-adic fields.  (Presumably the article \cite{Lo18} allows for an extension of the proofs in \cite{HII08} to positive characteristic.)

We recall a simplified version of the conjecture of Hiraga-Ichino-Ikeda.

\begin{conjecture}[\cite{HII08}, Conjecture 1.4]\label{HII}  Let $\phi: Gal(\bar{F}/F) \ra {}^LG$ be an  elliptic tempered Langlands parameter.  Then for any $\pi$ in the $L$-packet of $\phi$, the formal degree $d(\pi)$ is a non-zero constant multiple of the $\gamma$-factor $|\gamma(0,Ad\circ\phi,\psi)|$, where
$$|\gamma(s,Ad\circ\phi,\psi)| = |\varepsilon(s,Ad\circ\phi,\psi) \frac{L(1-s,Ad\circ \hat{\phi})}{L(s,Ad\circ\phi)}|.$$
\end{conjecture}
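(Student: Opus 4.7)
The plan is to establish the formula by first reducing to the supercuspidal case and then exploiting the Langlands--Shahidi machinery together with an endoscopic transfer argument. First I would reduce to the supercuspidal case: if $\phi$ factors through a proper Levi ${}^LM \subset {}^LG$, then the compatibility of the formal degree with parabolic induction (through Harish-Chandra's Plancherel density, which on the parameter side multiplies $Ad$ by the appropriate contribution of the unipotent radical) lets one derive the identity for $G$ from the corresponding identity for $M$. One may therefore assume $\phi$ is discrete, so that every $\pi$ in $\Pi_\phi$ is supercuspidal.

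Second, within the packet $\Pi_\phi$ I would reduce to a stable sum: the expected endoscopic character identities (à la Arthur-Moeglin) force the individual formal degrees to agree once the stable identity $\sum_{\pi \in \Pi_\phi} d(\pi) = c \cdot |\gamma(0, Ad\circ\phi, \psi)|$ is known. For a generic member $\pi_0 \in \Pi_\phi$, whose existence in positive characteristic is the content of Shahidi's genericity conjecture (accessible through Lomelí's Langlands--Shahidi theory for the relevant classes of groups), the Harish-Chandra $\mu$-function controlling the Plancherel density at the cuspidal support of $\pi_0$ is expressible, by Shahidi's theorem, in terms of the local $\gamma$-factor associated to the adjoint representation of $\hat G$; extracting the appropriate residue then produces the right-hand side of Conjecture~\ref{HII} at $s = 0$.

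Third, to pin down the constant $c$ and to propagate the identity from $\pi_0$ to the remaining members of $\Pi_\phi$, I would pass to a global setting. Using the Gan--Lomelí globalization (Theorem~\ref{glob}) I would realize $\pi_0$ as a local component of a cuspidal automorphic $\Pi$ on $G_K$, identify its Langlands--Lafforgue parameter via Corollary~\ref{compatibilities}, and compare the formal-degree formula with the global Plancherel expansion. Stability of $\gamma$-factors --- exactly as in the proof of Theorem~\ref{gln1} --- matches the automorphic and Galois-theoretic $\gamma$-factors at sufficiently ramified auxiliary places, isolating the local factor at the chosen place and fixing the constant by comparison with a known case (for instance $GL(n)$, where the conjecture was verified in \cite{HII08}).

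The main obstacle is clearly the propagation from the generic member of $\Pi_\phi$ to the non-generic ones: without Arthur's endoscopic classification --- and in positive characteristic, without even the stable twisted trace formula --- there is no direct control on how formal degrees of non-generic supercuspidals in a single packet relate to $\phi$. Already for generic representations, a fully unconditional execution over function fields would require Langlands--Shahidi theory for the adjoint representation of $\hat G$, which is available for classical and quasi-split unitary groups through Lomelí's work but not yet for exceptional groups, precisely the range in which the incorrigibility machinery of this paper is most needed.
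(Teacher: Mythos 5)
The statement you are trying to prove is not a theorem of the paper: it is Conjecture~1.4 of \cite{HII08}, quoted verbatim and labelled as a conjecture, and the paper explicitly says it ``remains open, with few results known for exceptional groups.'' The paper never proves it; it only \emph{uses} it as a hypothesis, in Proposition~\ref{HIIII}, to deduce Conjecture~\ref{noincorr}. So there is no proof in the paper to compare yours against, and no proposal along the lines you sketch could be checked against one. What you have written is a reasonable summary of how the known special cases of the formal degree conjecture have been attacked in the literature, and you are candid that every load-bearing step (endoscopic character identities, the stable twisted trace formula over function fields, Langlands--Shahidi theory for the adjoint representation of $\hat G$) is itself unavailable in the required generality. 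That candor is to your credit, but it also means the proposal is a research program, not a proof.

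Beyond that global point, your first reduction contains a concrete error. The hypothesis of the conjecture is that $\phi$ is an \emph{elliptic} tempered parameter, which by definition means its image is contained in no proper parabolic (equivalently, it does not factor through a proper Levi ${}^LM$); so the case you propose to ``reduce away'' is excluded by hypothesis, and for genuinely non-elliptic tempered parameters the formal degree is zero and the conjecture instead concerns Plancherel measure. Moreover, your conclusion that ellipticity forces every member of $\Pi_\phi$ to be supercuspidal is false: elliptic parameters with nontrivial monodromy operator $N$ (the Steinberg parameter being the simplest example) have packets consisting of discrete series that are not supercuspidal, and these are squarely within the scope of the conjecture. Finally, note that the paper only needs the supercuspidal case of Conjecture~\ref{HII} as an input (see the statement of Proposition~\ref{HIIII}), and even that restricted case is treated as hypothetical throughout.
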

In the statement, $Ad$ is the adjoint representation of ${}^LG$ on its Lie algebra and $\hat{\phi}$ is the parameter of the contragredient $\hat{\pi}$.  The formula in \cite{HII08} is completely explicit; the non-zero constant reflects the place of $\pi$ in its $L$-packet.  

\begin{prop}\label{HIIII}  Conjecture \ref{HII} for supercuspidal representations implies Conjecture \ref{noincorr}.
\end{prop}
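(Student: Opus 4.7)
The plan is to use incorrigibility to produce a supercuspidal base change $\pi_r$ whose Genestier-Lafforgue parameter is unramified, and then to apply Conjecture \ref{HII} to $\pi_r$ to derive a contradiction from the vanishing of the adjoint $\gamma$-factor at $s=0$.

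Let $\pi$ be a pure supercuspidal representation of $G(F)$ and suppose, for contradiction, that $\pi$ is incorrigible. Twisting by a character (using (iv) of Corollary \ref{compatibilities}) we may assume $\pi$ has unitary central character, hence is tempered. Write $\phi_\pi:\Gamma_F\to{}^LG$ for its Genestier-Lafforgue parameter. Purity combined with Grothendieck's monodromy theorem (applied to $\phi_\pi$ composed with a faithful representation of $\hat G$, as in the proof of Theorem \ref{propclassical}) implies that the image of inertia under $\phi_\pi$ is finite and hence solvable. We may therefore choose a tower $F=F_0\subset F_1\subset\cdots\subset F_r$ of prime-degree cyclic extensions such that $\phi_\pi|_{\Gamma_{F_r}}$ is unramified. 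By incorrigibility, together with the possibility of realizing this local tower inside a compatible global cyclic tower over a globalization of $\pi$ (produced by Theorem \ref{glob} and Hypothesis \ref{simpleBC}), we obtain a supercuspidal representation $\pi_r$ of $G(F_r)$ that is an iterated base change of $\pi$. The analogue for general $G$ of Theorem \ref{chtouca}, proved as in the $GL(n)$ case by Chebotarev density and Corollary \ref{compatibilities}(v), then identifies the parameter of $\pi_r$ with $\phi_r:=\phi_\pi|_{\Gamma_{F_r}}$, which is unramified. Purity is preserved under base change (raising Frobenius to a power preserves weights of Weil numbers), so $\pi_r$ is again pure of weight zero; and a pure Weil-Deligne representation of a single weight must have trivial monodromy operator $N$, so the full Weil-Deligne parameter of $\pi_r$ coincides with $\phi_r$.

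Writing $q_r$ for the residue cardinality of $F_r$, we therefore have
\[
L(s,Ad\circ\phi_r)=\det\bigl(1-q_r^{-s}\,Ad(\phi_r(Frob))\bigm|\,\mathrm{Lie}(\hat G)\bigr)^{-1}.
\]
Since $\phi_r(Frob)\in\hat G$ is semisimple, its centralizer contains a maximal torus, so $Ad(\phi_r(Frob))$ fixes a subspace of dimension at least $\mathrm{rank}(\hat G)>0$ in the Lie algebra; hence $L(s,Ad\circ\phi_r)$ has a pole at $s=0$. On the other hand, by purity all eigenvalues of $\phi_r(Frob)$ have absolute value $1$, so the same is true of those of $Ad(\phi_r(Frob))$, and in particular $q_r$ is not among them; thus $L(1,Ad\circ\hat\phi_r)$ is finite. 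Combined with the standard nonvanishing of $\varepsilon(0,Ad\circ\phi_r,\psi)$, this forces $|\gamma(0,Ad\circ\phi_r,\psi)|=0$. Conjecture \ref{HII} applied to the supercuspidal $\pi_r$ then asserts that $d(\pi_r)$ is a nonzero constant multiple of this $\gamma$-factor, whence $d(\pi_r)=0$, contradicting the positivity of the formal degree of a supercuspidal representation.

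The principal obstacle is verifying the two compatibilities that are implicitly used: first, that the local base change of Definition \ref{incorrigible} transports the Genestier-Lafforgue parameter by restriction to $\Gamma_{F_r}$ for a general reductive $G$ (the analogue of Theorem \ref{chtouca} beyond $GL(n)$), and second, that the paper's notion of purity is strong enough to force $N=0$ after base change, so that the adjoint $L$-factor is actually computed on all of $\mathrm{Lie}(\hat G)$ rather than on a smaller $N$-kernel. Both should be routine given the framework of Hypothesis \ref{simpleBC} and the Weil-Deligne interpretation of purity, but neither is explicitly developed in the preceding sections.
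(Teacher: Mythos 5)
Your proof is correct and follows essentially the same route as the paper's: restrict the Genestier--Lafforgue parameter along a solvable extension until it is unramified, observe that the adjoint $\gamma$-factor at $s=0$ then vanishes (pole of $L(s,Ad\circ\phi)$ from the positive-dimensional fixed space of $Ad(\phi(Frob))$, no pole of $L(1-s,Ad\circ\hat\phi)$ by weight-zero purity of the adjoint parameter), and conclude via Conjecture \ref{HII} that no base change of $\pi$ can be a discrete series, contradicting incorrigibility. The only cosmetic differences are that you reach weight zero through a central-character twist, whereas the paper simply notes that $Ad\circ\phi$ is automatically pure of weight $0$ (its Frobenius eigenvalues being ratios of Weil numbers of equal weight), and you add a discussion of $N=0$ that is unnecessary since the Genestier--Lafforgue parameter used here is already the semisimple one.
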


\begin{proof}  Suppose $\phi$ is pure unramified and tempered.  Then the centralizer $Z_{\hat{G}}(\phi)$ of $\phi$ in $\hat{G}$ is of positive dimension, so $Ad\circ\phi$ acts trivially on a positive-dimensional subspace of the Lie algebra of $\hat{G}$.  On the other hand, since $Ad \circ \phi$ is pure of weight $0$, the factor $L(1-s,Ad\circ \hat{\phi})$ has no pole at $s = 0$.  It then follows that $L(s,Ad\circ\phi)$ has a pole at $s = 0$, so $|\gamma(0,Ad\circ\phi,\psi)| = 0$.  Thus no multiple of $|\gamma(0,Ad\circ\phi,\psi)|$ can be the formal degree of a discrete series representation.

Now suppose $\pi$ is a pure  supercuspidal representation of $G(F)$, with Genestier-Lafforgue parameter $\phi$.   Let $F'/F$ be a (solvable) Galois extension such that $\phi' = \phi |_{\Gamma_{F'}}$ is unramified. Any base change $\pi'$ of $\pi$ to $G(F')$ then has Genestier-Lafforgue parameter $\phi'$, by the Chebotarev density argument we have already used.  Thus by the argument of the previous paragraph, the Hiraga-Ichino-Ikeda Conjecture \ref{HII} implies that $\pi'$ cannot be a discrete series representation.
\end{proof}

\begin{question}  Is the converse to Proposition \ref{HIIII} true?
\end{question}


\begin{thebibliography}{99}

\bibitem[A]{A}  J. Arthur, {\it The endoscopic classification of representations--orthogonal and symplectic groups}, {\it American Mathematical Society Colloquium Publications}, {\bf 61} (2013).

\bibitem[AC]{AC}  J. Arthur and L. Clozel, {\it Simple algebras, base change, and the advanced theory of the trace formula}, {\it Annals of Math. Studies}, {\bf 120}, Princeton:  Princeton University Press (1989).  

\bibitem[AT]{AT}  E. Artin and J. Tate, {\it Class Field Theory}, Providence, RI:  AMS Chelsea Publishing (2008).

\bibitem[Br08]{Br}  E. Brenner, Stability of the local gamma factor in the unitary case, {\it J. Number Theory}, {\bf 128} (2008) 1358--1375. 

\bibitem[BHKT]{BHKT}  G. B\"ockle, M. Harris, C. Khare, J. Thorne, $\hat{G}$-local systems on smooth projective curves are potentially automorphic, {\it Acta Math.}, in press.

\bibitem[BHK98]{BHK}   C. J. Bushnell, G. Henniart, P. Kutzko,  Local Rankin-Selberg convolutions for $GL_n$: explicit conductor formula. {\it J. Amer. Math. Soc.}, {\bf 11} (1998) 703--730. 

\bibitem[CHL11]{CHL}  L. Clozel, M. Harris, J.-P. Labesse, Construction of automorphic Galois representations, I., in L. Clozel, M. Harris, J.-P. Labesse, B. C. Ng\^o, eds. {\it The stable trace formula, Shimura varieties, and arithmetic applications. Volume I: Stabilization of the trace formula}, Boston: International Press (2011) 497--527.

\bibitem[De73]{De73}  P. Deligne, Les constantes des \'equations fonctionnelles des fonctions $L$, in {\it Modular functions of one variable, II (Proc. Internat. Summer School, Univ. Antwerp, Antwerp, 1972)}, {\it Lecture Notes in Math.}, {\bf 349}, Springer, Berlin (1973) 501--597. 


\bibitem[GLo]{GLo} W.-T. Gan and L. Lomel\'i,  Globalization of supercuspidal representations over function fields and applications, {\it J. Eur. Math. Soc.}, in press.

\bibitem[GS]{GS} W.-T. Gan and G. Savin, Endoscopic lifts from $PGL_3$ to $G_2$, {\it Compositio Math}, {\bf 140} (2004), 793--808. 

\bibitem[GV]{GV}  R. Ganapathy, S. Varma, On the local Langlands correspondence for split classical groups over local function fields. {\it J. Math. Inst. Jussieu}, {\bf 16} (2017) 987--1074.

\bibitem[GLa]{GLa} A. Genestier and V. Lafforgue, Chtoucas restreints pour les groupes r\'eductifs et param\'etrisation de Langlands locale.
\newblock Work in progress.



\bibitem[GH]{GH}  D. Ginzburg and J. Hundley,  A doubling integral for $G_2$, {\it Israel Journal of Mathematics}, {\bf 207} (2015)  835--879.

\bibitem[GRS]{GRS}  D. Ginzburg, S. Rallis, and D. Soudry, {\it The descent map from automorphic representations of $GL(n)$ to classical groups}, Singapore:  World Scientific (2011).

\bibitem[GJ72]{GJ}  R. Godement and H. Jacquet, Zeta functions of simple algebras, {\it Lecture Notes in Math.}, {\bf 260} (1972).

\bibitem[GS15]{GS1}  N. Gurevich and A. Segal, The Rankin-Selberg integral with a non-unique model for the standard $L$-function of $G_2$, {\it J. Inst. Math. Jussieu}, {\bf 14} (2015)  149--184.

\bibitem[GS16]{GS2}  N. Gurevich and A. Segal, Poles of the standard $L$-function of $G_2$ and the Rallis-Schiffmann lift, Preprint (2016), available at https://arxiv.org/abs/1606.09069.

\bibitem[H97]{H97}  M. Harris, Supercuspidal representations in the cohomology of Drinfel'd upper half spaces; elaboration of Carayol's program. {\it Invent. Math.}, {\bf 129} (1997) 75--119.

\bibitem[HKS]{HKS}  M. Harris, S. Kudla, W. J. Sweet, Theta dichotomy for unitary groups, {\it J. Amer. Math. Soc.}, {\bf 9} (1996) 941--1004.

\bibitem[HT01]{HT01}  M. Harris, R. Taylor, {\it The geometry and cohomology of some simple Shimura varieties.} With an appendix by Vladimir G. Berkovich. {\it Annals of Mathematics Studies}, {\bf 151} Princeton University Press, Princeton, NJ (2001).

\bibitem[He88]{He88} G. Henniart, La conjecture de Langlands locale num\'erique pour GL(n). {\it Ann. Sci. ƒcole Norm. Sup.} {\bf 21} (1988),  497--544.

\bibitem[He90]{He90} G. Henniart, Une cons\'equence de la th\'eorie du changement de base pour GL(n). {\it Analytic number theory} (Tokyo, 1988), {\it Lecture Notes in Math.}, {\bf 1434}, Springer, Berlin, (1990) 138--142. 

\bibitem[He00]{He00}  G. Henniart, Une preuve simple des conjectures de Langlands pour GL(n) sur un corps p-adique. {\it Invent. Math. } {\bf 139} (2000) 439--455. 

\bibitem[HII08]{HII08}  K. Hiraga, A. Ichino, T. Ikeda, Formal degrees and adjoint $\gamma$-factors.  {\it J. Amer. Math. Soc.}, {\bf 21} (2008)  283--304.

\bibitem[JS85]{JS85}  H. Jacquet, J. Shalika, A lemma on highly ramified $\varepsilon$ factors,  {\it Math. Annalen}, {\bf 271} (1985) 319--332.


\bibitem[Lab99]{Lab} J.-P. Labesse, Cohomologie, stabilisation, et changement de base, {\it Ast\'erisque}, {\bf 257} (1999).

\bibitem[Laf02]{Laf02} L. Lafforgue.
\newblock Chtoucas de {D}rinfeld et correspondance de {L}anglands.
\newblock {\em Invent. Math.}, 147(1):1--241, 2002.


\bibitem[Laf18]{Laf18}  V. Lafforgue. Chtoucas pour les groupes r{\'e}ductifs et param{\'e}trisation de
  {L}anglands globale, {\it JAMS}, {\bf  31} (2018) 719--891.
  
  
  \bibitem[LR05]{LR}  E. Lapid, S. Rallis, On the local factors of representations of classical groups, in J. W. Cogdell et al., eds, {\it Automorphic Representations, L-functions and Applications: Progress and Prospects}, Berlin:  de Gruyter (2005) 309--360.
  
 \bibitem[Lau09]{Lau}  G. Laumon, {\it Cohomology of Drinfeld Modular Varieties, Part 2. Automorphic Forms, Trace Formulas and Langlands Correspondence}, Cambridge:  Cambridge University Press (2009).  
 
 \bibitem[LRS93]{LRS}  G. Laumon, M. Rapoport, and U. Stuhler, D-elliptic sheaves and the Langlands correspondence, {\it Inventiones math.} {\bf 113} (1993) 217--338.
 
\bibitem[Lo18]{Lo18}  L. Lomel\',  Rationality and holomorphy of LanglandsÐShahidi $L$-functions over function fields, {\it Math. Z.} (2018) (in press).  

\bibitem[PSR87]{psr}  S. Rallis, I. I. Piatetski-Shapiro, {\it $L$-functions for the classical groups}, in {\it Explicit constructions of automorphic $L$-functions}, {\it Lecture Notes in Math.}, {\bf 1254} Springer-Verlag, Berlin, (1987) 1--52. 

\bibitem[RS05]{RS}  S.  Rallis,  D.  Soudry,  Stability  of  the  local  gamma  factor  arising  from  the  doubling  method,  {\it Math.
Ann.}, {\bf 333} (2005) 291--313. 

\bibitem[Sch13]{Sch13} P. Scholze, The local Langlands correspondence for $GL_n$ over $p$-adic fields. {\it Invent. Math.}, {\bf 192} (2013) 663--715.

\bibitem[Se]{Se}  A. Segal, A family of new-way integrals for the standard $L$-function of cuspidal representations of the exceptional group of type $G_2$, {\it IMRN}, {\bf 2017} (2017)  2014-2099.

\bibitem[Y]{Y}  S. Yamana, $L$-functions and theta correspondence for classical groups, {\it Invent. Math.}, {\bf 196} (2014) 651--732.

\end{thebibliography}
\end{document}